\documentclass[a4paper,12pt]{article}

\usepackage{amsthm, amsmath, amssymb }
\usepackage[ngerman, english]{babel}
\usepackage[utf8]{inputenc} 
\usepackage[T1]{fontenc} 
\usepackage{enumitem}


\newcommand*{\cB}{\mathcal{B}}

\newcommand*{\cE}{\mathcal{E}}

\newcommand*{\cH}{\mathcal{H}}

\newcommand*{\cL}{\mathcal{L}}

\newcommand*{\cN}{\mathcal{N}}
\newcommand*{\cP}{\mathcal{P}}

\newcommand*{\cS}{\mathcal{S}}
\newcommand*{\cU}{\mathcal{U}}
\newcommand*{\cV}{\mathcal{V}}
\newcommand*{\cW}{\mathcal{W}}

\newcommand{\A}{\mathbb{A}}
\newcommand*{\E}{\mathbb{E}}
\newcommand{\Q}{\mathbb{Q}}
\newcommand{\PP}{\mathbb{P}}

\newcommand*{\C}{\mathbb{C}}
\newcommand*{\N}{\mathbb{N}}
\newcommand*{\Z}{\mathbb{Z}}
\newcommand*{\R}{\mathbb{R}}

\newcommand*{\Rd}{\R^d}

\newcommand{\lb}{\left(}

\newcommand{\lcb}{\left\{}

\newcommand{\lVb}{\left\|}

\newcommand{\lab}{\langle}

\newcommand{\rb}{\right)}

\newcommand{\rcb}{\right\}}

\newcommand{\rVb}{\right\|}

\newcommand{\rab}{\rangle}

\newcommand*{\1}{{\bf 1}}
\newcommand*{\e}{\mathrm{e}}
\newcommand*{\ii}{\mathrm{i}}
\newcommand{\rmd}{\mathrm{d}}
\newcommand{\rmE}{\mathrm{E}}

\newcommand{\ind}{\1_{[0,t)}}
\newcommand{\Hol}{\mathrm{Hol}_0(\cN_\C)}

\newcommand{\wh}{\widehat}
\newcommand{\ot}{\otimes}

\newcommand{\otn}{{\ot n}}

\newcommand{\wot}{\mathbin{\wh{\ot}}}

\newcommand{\wotn}{{\wot n}}


\DeclareMathOperator{\spann}{span}

\newcommand*{\halb}{\frac{1}{2}}
\newcommand{\abs}[1]{\left|#1\right|}
\newcommand{\norm}[1]{\|#1\|}

\newcommand{\ddp}[2]{\lab\!\lab#1,#2\rab\!\rab}

\newtheorem{satz}{Satz}[section]

\newtheoremstyle{tp}
{4,75pt}
{4,75pt}
{\itshape}
{}
{\bf}
{.}
{.5em}
{}
\theoremstyle{tp}

\newtheorem{theorem}[satz]{Theorem}

\newtheoremstyle{mystyle}
{4,75pt}
{4.75pt}
{\normalfont}
{}
{\bf}
{.}
{.5em}
{}
\theoremstyle{mystyle}

\newtheorem{definition}[satz]{Definition}
\newtheorem{proposition}[satz]{Proposition}
\newtheorem{corollary}[satz]{Corollary}
\newtheorem{lemma}[satz]{Lemma}
\newtheorem{remark}[satz]{Remark}
\newtheorem{example}[satz]{Example}

\newenvironment{Beweis*}{{\bf Beweis}}{{\unskip\nobreak\hfil\penalty50\hskip2em\hbox{}\nobreak\hfil\rule{2mm}{2mm}\parfillskip=0pt \finalhyphendemerits=0 \par}\vskip 2ex}

\title{Mittag-Leffler Analysis I: Construction and characterization}
\date{\today}
\author{Martin~Grothaus \and Florian~Jahnert \and Felix~Riemann \and Jos\'e~Lu\'is~da~Silva}

\begin{document}

\maketitle

\begin{abstract}
\noindent We construct an infinite dimensional analysis with respect to non-Gaussian measures of Mittag-Leffler type which we call Mittag-Leffler measures. It turns out that the well-known Wick ordered polynomials in Gaussian analysis cannot be generalized to this non-Gaussian case. Instead of using Wick ordered polynomials we prove that a system of biorthogonal polynomials, called Appell system, is applicable to the Mittag-Leffler measures. Therefore we are able to introduce a test function and a distribution space. As an application we construct Donsker's delta in a non-Gaussian setting as a weak integral in the distribution space. 
\end{abstract}

\noindent {\bf Keywords:} Non-Gaussian analysis, generalized functions, Mittag-Leffler functions, Appell systems, grey noise measure \\

\noindent {\bf Mathematics Subject Classification (2010):} Primary: 46F25, 46F12. Secondary: 60G22, 33E12.

%
%
%
%


\section{Introduction} 
\noindent During the last decades white noise analysis has evolved into an infinite dimensional distribution theory, with rapid developments in mathematical structure and applications in various domains. For an overview we refer to the monographs \cite{HKPS, Oba, Kuo}. Especially, a deep understanding of the structure of spaces of smooth and generalized random variables over the white noise space or, more generally, Gaussian spaces is provided by various characterization theorems \cite{PS91, KLPSW96, GKS99}. Also, the theory of white noise analysis and the including tools can be applied to a number of fields in mathematics and physics, for example Feynman integration \cite{FPS91, HS83, KS92, LLSW93}, representation of quantum field theory \cite{AHPS89, PS93}, intersection local times for Brownian motion \cite{FHSW97, Wa91} as well as for fractional Brownian motion \cite{DOS08, OSS11}, Dirichlet forms \cite{AHPRS90a, AHPRS90b, HPS88}, infinite dimensional harmonic analysis \cite{Hi89} and so forth.

Almost at the same time, first attempts were made to introduce a non-Gaussian infinite dimensional analysis, by transferring properties of the Gaussian measure to the Poisson measure \cite{Ito88}. This approach can be generalized with the help of a biorthogonal system, which consists of generalized Appell systems \cite{Da91, ADKS96, KSWY98}. It was shown that this approach is suitable for a wide class of measures, including the Gaussian measure and the Poisson measure \cite{KSS97}. Two properties of the measure are essential: An analyticity condition of its Laplace transform and a non-degeneracy or positivity condition (see also \cite{KK99}). In this concept, similar notions and characterizations as in Gaussian analysis can be introduced \cite{KSWY98}.

In this work we concentrate on those measures whose characteristic functions are given via Mittag-Leffler functions. We refer to these measures as Mittag-Leffler measures. The grey noise measure \cite{Sch92, MM09} is included as a special case in the class of Mittag-Leffler measures, which offers the possibility to apply the Mittag-Leffler analysis to fractional differential equations, in particular to fractional diffusion equations \cite{Sch90, Sch92, MLP01}, which carry numerous applications in science, like relaxation type differential equations or viscoelasticity. 

In this paper we prove that the Mittag-Leffler measures belong to the class of measures for which Appell systems exist. Hence we are able to introduce spaces of test functions and distributions with respect to Mittag-Leffler measures. Moreover we construct a distribution which can be considered as a generalization of Donsker's delta from Gaussian analysis.

\section{The finite dimensional Mittag-Leffler measure}
\noindent In this section we recall the Mittag-Leffler measures in the finite dimensional Euclidean space $\Rd,d\in\N$. Using the Gram-Schmidt method we compute the first orthogonal polynomials for that measure and show certain key properties of them.

\begin{definition}
For $0<\beta<\infty$ the Mittag-Leffler function is an entire function defined by its power series
\[
	\rmE_\beta (z) := \sum_{n=0}^\infty \frac{z^n}{\Gamma(\beta n+1)},\quad z\in\C.
\]
Here $\Gamma$ denotes the well-known Gamma function which is an extension of the factorial to complex numbers, such that $\Gamma(n+1) = n!$ for $n\in\N$.
\end{definition}

The Mittag-Leffler function was introduced by G\"osta Mittag-Leffler in \cite{ML05}, see also \cite{W05a, W05b}. In \cite{P48, Fel71} it was shown that for $\beta\in (0,1]$ the mapping $\lcb t\in\R\mid t>0\rcb\ni t\mapsto\rmE_\beta(-t)\in\R$ is completely monotonic. This is sufficient to prove the following lemma, see \cite{P48}:

\begin{lemma}\label{mixingmeasure}
For $\beta\in(0,1]$ there exists a unique probability measure $\nu_\beta$ on $(0,\infty)$ such that
\[
	\rmE_\beta(-t)=\int_0^\infty\e^{-ts}\,\rmd\nu_\beta(s) 
\]
for all $t\ge 0$. For $\beta\in(0,1)$ the measure $\nu_\beta$ is absolutely continuous with respect to the Lebesgue measure on $(0,\infty)$ and the density $g_\beta$ of $\nu_\beta$ is given by \cite{P48, Sch92}:
\[
	g_\beta(t) = \frac{1}{\beta} t^{-1-1/\beta} f_\beta(t^{-1/\beta}), \quad t\ge 0,
\]
where $f_\beta$ denotes the one-sided $\beta$-stable probability density which can be characterized by its Laplace transform
\[
	\int_0^\infty \e^{-tx} f_\beta(x) \,\rmd x = \e^{-t^\beta}, \quad t\ge 0.
\]
\end{lemma}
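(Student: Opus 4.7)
The plan is to separate the statement into two independent parts: existence and uniqueness of the measure $\nu_\beta$, and the explicit density formula for $\beta\in(0,1)$.

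For existence and uniqueness I would invoke Bernstein's theorem: a function $\varphi\colon[0,\infty)\to\R$ with $\varphi(0)=1$ is completely monotonic if and only if it is the Laplace transform of a unique probability measure on $[0,\infty)$. Since $\rmE_\beta(0)=1$ and $t\mapsto\rmE_\beta(-t)$ is completely monotonic on $(0,\infty)$ (with continuous extension to $t=0$) by the Pollard--Feller result cited immediately above the lemma, Bernstein's theorem yields a unique probability measure $\nu_\beta$ on $[0,\infty)$ with $\rmE_\beta(-t)=\int_0^\infty\e^{-ts}\,\rmd\nu_\beta(s)$ for all $t\ge 0$. Because $\rmE_\beta(-t)\to 0$ as $t\to\infty$ for $\beta\in(0,1]$, dominated convergence applied to $\e^{-ts}\to\1_{\{0\}}(s)$ forces $\nu_\beta(\{0\})=0$, so $\nu_\beta$ may be regarded as a probability measure on $(0,\infty)$.

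For the density part, by the uniqueness just established it suffices to verify that the proposed $g_\beta$ is a non-negative density whose Laplace transform equals $\rmE_\beta(-t)$. Non-negativity and measurability are immediate from the same properties of $f_\beta$. I would compute the moments
\[
m_n := \int_0^\infty s^n g_\beta(s)\,\rmd s,\qquad n\in\N_0,
\]
via the substitution $u=s^{-1/\beta}$; the Jacobian exactly cancels the prefactor $1/\beta$ and the factor $s^{-1-1/\beta}$ in $g_\beta$, yielding $m_n=\int_0^\infty u^{-\beta n}f_\beta(u)\,\rmd u$. For $n\ge 1$ I would insert the integral representation $u^{-\beta n}=\Gamma(\beta n)^{-1}\int_0^\infty r^{\beta n-1}\e^{-ru}\,\rmd r$, apply Tonelli, use the Laplace transform $\int_0^\infty\e^{-ru}f_\beta(u)\,\rmd u=\e^{-r^\beta}$ from the statement, and substitute once more by $v=r^\beta$ to reach
\[
m_n=\frac{\Gamma(n)}{\beta\,\Gamma(\beta n)}=\frac{n!}{\Gamma(\beta n+1)},
\]
where the last equality uses $\Gamma(\beta n+1)=\beta n\,\Gamma(\beta n)$. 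The case $n=0$ reduces directly to $\int_0^\infty f_\beta(u)\,\rmd u=1$, so $g_\beta$ is in fact a probability density.

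The step I expect to require the most care is promoting these moment identities to the full Laplace transform relation $\int_0^\infty\e^{-ts}g_\beta(s)\,\rmd s=\rmE_\beta(-t)$, because $\e^{ts}g_\beta(s)$ is in general not integrable and a naive Fubini argument is therefore unavailable. My plan is to use the Taylor remainder bound
\[
\left|\e^{-ts}-\sum_{k=0}^{N}\frac{(-ts)^k}{k!}\right|\le\frac{(ts)^{N+1}}{(N+1)!},
\]
integrate against $g_\beta$, and observe that the resulting error equals $t^{N+1}/\Gamma(\beta(N+1)+1)$ by the moment computation; this tends to $0$ as $N\to\infty$ for every fixed $t\ge 0$ because $\rmE_\beta$ is entire. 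Passing to the limit yields $\int_0^\infty\e^{-ts}g_\beta(s)\,\rmd s=\sum_{n\ge 0}(-t)^n/\Gamma(\beta n+1)=\rmE_\beta(-t)$, and by uniqueness from Bernstein's theorem we conclude $\rmd\nu_\beta=g_\beta\,\rmd s$.
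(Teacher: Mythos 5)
Your proposal is correct, and it is worth noting that the paper does not actually prove this lemma: it records that $t\mapsto\rmE_\beta(-t)$ is completely monotonic (citing Pollard and Feller) and then refers to \cite{P48} both for the Bernstein step and for the density formula. Your first half therefore coincides with the route the paper indicates: complete monotonicity plus the Hausdorff--Bernstein--Widder theorem yields a unique probability measure on $[0,\infty)$, and your observation that $\rmE_\beta(-t)\to 0$ forces $\nu_\beta(\lcb 0\rcb)=0$ supplies a detail the paper glosses over (for $\beta<1$ you could also obtain it for free from absolute continuity once the density is identified, and for $\beta=1$ one simply has $\nu_1=\delta_1$). The second half is where you genuinely add content: instead of citing Pollard's Laplace inversion you verify directly that $g_\beta$ is a probability density with moments $m_n=n!/\Gamma(\beta n+1)$ and then upgrade the moment identities to the Laplace-transform identity via the Taylor remainder bound. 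Your substitutions ($u=s^{-1/\beta}$, the Gamma-integral representation of $u^{-\beta n}$, and $v=r^\beta$), the use of Tonelli (all integrands nonnegative), and the error estimate $t^{N+1}m_{N+1}/(N+1)!=t^{N+1}/\Gamma(\beta(N+1)+1)\to 0$ are all correct, the last because these are the terms of the convergent series defining the entire function $\rmE_\beta$. The only loose end is that the decay $\rmE_\beta(-t)\to 0$ as $t\to\infty$ for $\beta\in(0,1)$ is itself a nontrivial asymptotic fact that should be cited or derived (or bypassed via the absolute-continuity remark above); everything else in your argument is self-contained, whereas the paper's treatment is entirely by reference.
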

The density $g_\beta$ can also be expressed in terms of the H-function \cite{Sch86,Sch92}:
\[
	g_\beta(t) = H^{1\,0}_{1\,1}\lb t \left| \begin{matrix} (1-\beta,\beta) \\ (0,1) \end{matrix} \right. \rb, \quad t\ge 0.
\]
This shows that the probability measure $\nu_\beta$ has the $M$-Wright function $M_\beta$ as density, see Lemma \ref{Mbeta=Hfunction} below.

With this result one can show, see \cite{Sch92}, that the mapping 
\[
	\R^d\ni p \mapsto \rmE_\beta\lb-\tfrac{1}{2}(p,p)\rb\in\R
\]
is a characteristic function and thus defines a unique probability measure on $\Rd$:

\begin{definition}
For $\beta\in(0,1]$ we define $\mu_\beta^d$ to be the unique probability measure on $\R^d$ fulfilling
\[ \int_{\R^d}\exp(\ii(p,x))\,\rmd\mu_\beta^d(x)=\rmE_\beta\Bigl(-\halb(p,p)\Bigr) \]
for all $p\in\R^d$.
\end{definition}

In \cite{Sch92} the moments of $\mu_\beta^d$ are computed:

\begin{lemma}\label{lem:moments}
The measure $\mu_\beta^d$ has moments of all order. More precisely, if
\[ M_d^\beta(n_1,\dotsc,n_d):=\int_{\R^d}x_1^{n_1}\dotsm x_d^{n_d}\,\rmd\mu_\beta^d(x) \]
for $n_1,\dotsc,n_d\in\N$, then $M_d^\beta(n_1,\dotsc,n_d)=0$ if any of the $n_i$ is odd, and for the even moments we have
\[ M_d^\beta(2n_1,\dotsc,2n_d)=\frac{(2n_1)!\dotsm(2n_d)!(n_1+\dotsb+n_d)!}{2^{n_1+\dotsb+n_d}n_1!\dotsm n_d!\Gamma(\beta(n_1+\dotsb+n_d)+1)}. \]
\end{lemma}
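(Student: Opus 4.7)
The plan is to exploit the scale-mixture representation of $\mu_\beta^d$ implicit in Lemma \ref{mixingmeasure}. Substituting $t=\tfrac{1}{2}(p,p)\ge 0$ into the integral formula for $\rmE_\beta(-t)$ gives
\[ \rmE_\beta\lb-\tfrac{1}{2}(p,p)\rb = \int_0^\infty \exp\lb-\tfrac{s}{2}(p,p)\rb\,\rmd\nu_\beta(s), \]
and the integrand on the right is the characteristic function of the centered Gaussian $N(0,sI_d)$ on $\Rd$. By uniqueness of characteristic functions, $\mu_\beta^d$ coincides with the scale mixture $\int_0^\infty N(0,sI_d)\,\rmd\nu_\beta(s)$; equivalently, a random vector $X\sim\mu_\beta^d$ has the same law as $\sqrt{S}\,Y$ where $S\sim\nu_\beta$ and $Y\sim N(0,I_d)$ are independent.

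With this representation in hand, Fubini's theorem reduces the moment computation to a Gaussian moment multiplied by a moment of $\nu_\beta$. Writing $X_i=\sqrt{S}\,Y_i$, one has $\prod_i X_i^{n_i}=S^{(n_1+\cdots+n_d)/2}\prod_i Y_i^{n_i}$, and the components of $Y$ are i.i.d.\ centered. Hence $\E[\prod_i Y_i^{n_i}]=0$ whenever any $n_i$ is odd, so $M_d^\beta(n_1,\ldots,n_d)=0$ in that case. For even exponents the classical Gaussian moment $\E[Y_i^{2n_i}]=(2n_i)!/(2^{n_i}n_i!)$ yields
\[ M_d^\beta(2n_1,\ldots,2n_d) = \lb\prod_{i=1}^d \frac{(2n_i)!}{2^{n_i}n_i!}\rb\int_0^\infty s^{n_1+\cdots+n_d}\,\rmd\nu_\beta(s), \]
reducing the problem to the computation of $\int_0^\infty s^n\,\rmd\nu_\beta(s)$.

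To identify this integral with $n!/\Gamma(\beta n+1)$, I would read off derivatives of the Laplace transform $L(t):=\rmE_\beta(-t)=\int_0^\infty e^{-ts}\,\rmd\nu_\beta(s)$ at the origin. For $t>0$ the integrand $s^n e^{-ts}$ is bounded, so differentiation under the integral applies and $(-1)^n L^{(n)}(t)=\int_0^\infty s^n e^{-ts}\,\rmd\nu_\beta(s)$. Letting $t\downarrow 0$, monotone convergence (the integrands are non-negative and increase pointwise to $s^n$) gives $\int_0^\infty s^n\,\rmd\nu_\beta(s)=(-1)^n L^{(n)}(0)$, and this limit is finite because $L$ is entire; reading off the power-series coefficients of $L$ at $0$ gives $L^{(n)}(0)=(-1)^n n!/\Gamma(\beta n+1)$. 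Combining with the previous display yields the claimed formula. The only genuinely delicate step is moment existence for $\nu_\beta$, but the monotone convergence argument disposes of it cleanly; everything else is routine book-keeping combining the subordination identity with the standard Gaussian moments.
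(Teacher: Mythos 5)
Your argument is correct, but note that the paper does not actually prove this lemma: it simply quotes the moment formula from \cite{Sch92}, where the computation is carried out by repeatedly differentiating the characteristic function $p\mapsto\rmE_\beta\lb-\tfrac12(p,p)\rb$ at the origin. Your subordination route --- writing $X\stackrel{d}{=}\sqrt{S}\,Y$ with $S\sim\nu_\beta$ independent of $Y\sim N(0,I_d)$, so that the mixed moment factors into a Gaussian moment times $\int_0^\infty s^{n}\,\rmd\nu_\beta(s)=n!/\Gamma(\beta n+1)$ --- is a genuinely different and arguably more transparent derivation; it is also exactly the finite-dimensional version of the elliptically contoured representation $\mu_\beta=\int_0^\infty\mu^{(s)}\,\rmd\nu_\beta(s)$ that the paper proves later in Section~4 (via Fremlin's Lemma 452A), so your proof anticipates a structure the authors use anyway. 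The formula checks out: $\prod_i\frac{(2n_i)!}{2^{n_i}n_i!}\cdot\frac{N!}{\Gamma(\beta N+1)}$ with $N=n_1+\dotsb+n_d$ is precisely the claimed expression, and your Laplace-transform argument for the moments of $\nu_\beta$ (differentiate under the integral for $t>0$, then let $t\downarrow 0$ by monotone convergence and match against the Taylor coefficients of the entire function $\rmE_\beta(-\cdot)$) is the standard and correct way to get $\int_0^\infty s^n\,\rmd\nu_\beta(s)=n!/\Gamma(\beta n+1)$ together with its finiteness. Two small points deserve a sentence if you write this up: (i) to identify $\mu_\beta^d$ with the mixture you should note that $s\mapsto N(0,sI_d)(A)$ is measurable so that $\int_0^\infty N(0,sI_d)\,\rmd\nu_\beta(s)$ is a bona fide probability measure before comparing characteristic functions (or bypass the mixture measure entirely and apply Fubini directly to $\E[S^{N/2}\prod_iY_i^{n_i}]$); and (ii) in the odd case you need $\E[S^{N/2}]<\infty$ for a possibly half-integer exponent to justify Fubini and conclude the expectation is $0$ rather than undefined --- this follows from Lyapunov's inequality and the finiteness of the integer moments you already established.
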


We apply Gram-Schmidt orthogonalization to the monomials $x^n$, $n\in\N$, to obtain monic polynomials $H_n^\beta$, $n\in\N$, with $\deg H_n^\beta=n$, which are orthogonal in $L^2(\R,\mu_\beta^1)$. These polynomials are determined by the moments of the measure $\mu_\beta^1$. The first five of these polynomials are given by
\begin{align*}
H_0^\beta(x) & = 1, & H_1^\beta(x) & = x,\\ 
H_2^\beta(x) & = x^2-\frac{1}{\Gamma(\beta+1)}, & H_3^\beta(x) & = x^3-\frac{6\Gamma(\beta+1)}{\Gamma(2\beta+1)}\,x,
\end{align*} 
\[
	H_4^\beta(x) = x^4 - c(\beta)x^2 + \frac{c(\beta)}{\Gamma(\beta+1)}-\frac{6}{\Gamma(2\beta+1)},
\]
where
\[ c(\beta)=\frac{90\Gamma(\beta+1)^2\Gamma(2\beta+1)-6\Gamma(\beta+1)\Gamma(3\beta+1)}{6\Gamma(\beta+1)^2\Gamma(3\beta+1)-\Gamma(2\beta+1)\Gamma(3\beta+1)}. \]
Note that for $\beta=1$ the measure $\mu_\beta^d$ is the standard Gaussian measure, hence in this case these polynomials reduce to the Hermite polynomials, which are orthogonal with respect to the weighting function $\exp(-\tfrac{1}{2}x^2)$.

\begin{corollary}\label{cor:notorthogonal}
For $\beta\in(0,1]$ it holds
\[ \int_{\R^2}H_4^\beta(x)H_2^\beta(y)\,\rmd\mu_\beta^2(x,y)=0 \]
if and only if $\beta=1$.
\end{corollary}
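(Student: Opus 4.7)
The plan is to reduce the statement to an explicit identity between values of $\Gamma$ at $\beta+1$, $2\beta+1$, $3\beta+1$ by computing the integral directly, and then to analyse when that identity can hold. Set $u:=\Gamma(\beta+1)$, $v:=\Gamma(2\beta+1)$, $w:=\Gamma(3\beta+1)$, and write $H_4^\beta(x)=x^4-c(\beta)x^2+d(\beta)$ and $H_2^\beta(y)=y^2-1/u$, where $d(\beta):=c(\beta)/u-6/v$ is the constant term read off from the excerpt. Expanding the product $H_4^\beta(x)H_2^\beta(y)$ produces six monomial integrals against $\mu_\beta^2$; each is supplied by Lemma \ref{lem:moments}, giving $M_2^\beta(4,2)=18/w$, $M_2^\beta(2,2)=2/v$, $M_1^\beta(4)=6/v$ and $M_1^\beta(2)=1/u$. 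The two contributions involving $d(\beta)$ cancel immediately, since $\int y^2\,d\mu_\beta^2=1/u$.

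What remains simplifies to
\[
I(\beta)\;=\;\frac{18}{w}-\frac{6}{uv}+c(\beta)\,\frac{v-2u^2}{u^2 v}.
\]
Substituting the explicit form $c(\beta)=6u(15uv-w)/[w(6u^2-v)]$ from the excerpt and bringing everything over the common denominator $uvw(6u^2-v)$, a direct algebraic simplification collapses the numerator to $-24u(3u^2v-3v^2+uw)$ and yields
\[
I(\beta)\;=\;\frac{-24\bigl(3u^2 v-3v^2+uw\bigr)}{v\,w\,(6u^2-v)}.
\]
The denominator is strictly positive on $(0,1]$: $u,v,w>0$ trivially, and the bound $v\le 2u^2<6u^2$ follows from Legendre's duplication formula applied to $\Gamma(2\beta+1)$.

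Hence $I(\beta)=0$ if and only if $h(\beta):=3u^2 v-3v^2+uw$ vanishes. For $\beta=1$ one has $u=1$, $v=2$, $w=6$, so $h(1)=6-12+6=0$, which gives the easy direction and reflects the fact that $\mu_1^2$ is the standard two-dimensional Gaussian, under which the $H_n^1$ (Hermite polynomials) are orthogonal in each variable separately. The main obstacle is the converse: showing $h(\beta)\neq 0$ for every $\beta\in(0,1)$. My plan here is a real-analyticity argument on $[0,1]$: $h$ is real-analytic with $h(0^+)=1>0$ and $h(1)=0$, and it suffices to show that $\beta=1$ is a simple zero and that no further zero lies in $(0,1)$. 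Both can be handled by differentiating through the digamma function $\psi=\Gamma'/\Gamma$ and exploiting the log-convexity of $\Gamma$ (equivalently, strict monotonicity of $\psi$) to fix the sign of $h$ on $(0,1)$.
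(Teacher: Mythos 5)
Your computational reduction is correct and is essentially the paper's own: expanding $H_4^\beta(x)H_2^\beta(y)$ against the moments from Lemma \ref{lem:moments}, cancelling the constant-term contributions, and substituting $c(\beta)$ gives exactly the paper's expression $A(\beta)\bigl(3\Gamma(2\beta+1)^2-3\Gamma(\beta+1)^2\Gamma(2\beta+1)-\Gamma(\beta+1)\Gamma(3\beta+1)\bigr)$ with $A(\beta)=24/\bigl(\Gamma(2\beta+1)\Gamma(3\beta+1)(6\Gamma(\beta+1)^2-\Gamma(2\beta+1))\bigr)$; your $-24(3u^2v-3v^2+uw)/\bigl(vw(6u^2-v)\bigr)$ is the same quantity up to a sign rearrangement, and your attention to the nonvanishing of the prefactor is, if anything, more explicit than the paper's bare assertion $A(\beta)\neq 0$. (A small caveat: the duplication formula alone gives $v=4^\beta\pi^{-1/2}\Gamma(\beta+\tfrac12)\,u$, so to get $v\le 2u^2$ you still need the monotonicity of $\beta\mapsto 4^\beta\Gamma(\beta+\tfrac12)/\Gamma(\beta+1)$ on $(0,1]$; only the much weaker bound $v<6u^2$ is actually required here.)

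The genuine gap is the converse direction, which is precisely where you stop. Everything from ``My plan here is'' onward is a programme, not an argument: you say it suffices to show that $\beta=1$ is a simple zero of $h(\beta)=3u^2v-3v^2+uw$ and that no further zero lies in $(0,1)$ --- but the latter \emph{is} the assertion to be proved, and knowing $h(0^+)=1>0$, $h(1)=0$ and simplicity of the zero at $1$ does not exclude an even number of additional sign changes in $(0,1)$. Moreover, the tool you propose does not obviously close the argument: log-convexity of $\Gamma$ gives $v^2\le uw$, which bounds $h$ from below by $u(3uv-2w)$, and that quantity is negative near $\beta=1$ (it equals $-6$ at $\beta=1$), so the most direct application of log-convexity points the wrong way and a more delicate estimate is needed. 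In fairness, the paper itself also asserts without proof that the bracketed term vanishes only at $\beta=1$; but as written your proposal establishes only the easy direction and reduces the hard one to an unproved positivity claim about a combination of Gamma values.
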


\begin{proof}
By Lemma \ref{lem:moments} we have
\begin{align*}
&\int_{\R^2}H_4^\beta(x)H_2^\beta(y)\,\rmd\mu_\beta^2(x,y)\\
 & = \frac{18}{\Gamma(3\beta+1)}-\frac{6}{\Gamma(2\beta+1)\Gamma(\beta+1)}-\frac{2c(\beta)}{\Gamma(2\beta+1)}+\frac{c(\beta)}{\Gamma(\beta+1)^2}\\
 & = A(\beta)
\cdot\Bigl(3\Gamma(2\beta+1)^2-3\Gamma(\beta+1)^2\Gamma(2\beta+1)-\Gamma(\beta+1)\Gamma(3\beta+1)\Bigr),
\end{align*}
where $A(\beta) = 24/\lb \Gamma(2\beta+1)\Gamma(3\beta+1)(6\Gamma(\beta+1)^2-\Gamma(2\beta+1)) \rb\neq 0$. The term inside the bracket is zero if and only if $\beta=1$.
\end{proof}

\begin{remark}
This result can also be used to show that $\mu_\beta^{k+l}$ is the product measure of $\mu_\beta^k$ and $\mu_\beta^l$ for $k,l\ge 1$ if and only if $\beta=1$.
\end{remark}

\section{The Mittag-Leffler measure}\label{Mittag-Leffler measure}
\noindent In this section we repeat the construction of the Mittag-Leffler measure as a probability measure on a conuclear space $\cN'$ from \cite{Sch92}. First we need to collect some facts about nuclear triples used in this paper. For details see e.g.~\cite{Sch71, RS}.

Let $\cH$ be a real separable Hilbert space with inner product $(\cdot,\cdot)$ and corresponding norm $\abs{\cdot}$. Let $\cN$ be a nuclear space which is continuously and densely embedded in $\cH$ and let $\cN'$ be its dual space. The canonical dual pairing between $\cN'$ and $\cN$ is denoted by $\lab\cdot,\cdot\rab$, and by identifying $\cH$ with its dual space via the Riesz isomorphism we get the inclusions $\cN\subset\cH\subset\cN'$. In particular $\lab f,\varphi\rab=(f,\varphi)$ for $f\in\cH$, $\varphi\in\cN$.

We assume that $\cN$ can be represented by a countable family of Hilbert spaces as follows: For each $p\in\N$ let $\cH_p$ be a real separable Hilbert space with norm $\abs{\cdot}_p$ such that $\cN\subset\cH_{p+1}\subset\cH_p\subset\cH$ continuously and the inclusion $\cH_{p+1}\subset\cH_p$ is a Hilbert Schmidt operator. It is no loss of generality to assume $\abs{\cdot}_p\le\abs{\cdot}_{p+1}$ on $\cH_{p+1}$ and $\cH_0=\cH$, $\abs{\cdot}_0=\abs{\cdot}$. The space $\cN$ is assumed to be the projective limit of the spaces $(\cH_p)_{p\in\N}$, that is $\cN=\bigcap_{p\in\N}\cH_p$ and the topology on $\cN$ is the coarsest topology such that all inclusions $\cN\subset\cH_p$ are continuous.

This also gives a representation of $\cN'$ in terms of an inductive limit: Let $\cH_{-p}$ be the dual space of $\cH_p$ with respect to $\cH$ and let the dual pairing between $\cH_{-p}$ and $\cH_p$ be denoted by $\lab\cdot,\cdot\rab$ as well. Then $\cH_{-p}$ is a Hilbert space and we denote its norm by $\abs{\cdot}_{-p}$. It follows by general duality theory that $\cN'=\bigcup_{p\in\N}\cH_{-p}$, and we may equip $\cN'$ with the inductive topology, that is the finest locally convex topology such that all inclusions $\cH_{-p}\subset\cN'$ are continuous.

We end up with the following chain of dense and continuous inclusions:
\[ \cN\subset\cH_{p+1}\subset\cH_p\subset\cH\subset\cH_{-p}\subset\cH_{-(p+1)}\subset\cN'. \]

The Hilbert space tensor product of the space $\cH_p$ is denoted by $\cH_p^\otn$ and the notation $\abs{\cdot}_p$ is kept for the norm on this space. The subspace of symmetric elements is denoted by $\cH_p^\wotn$. The same notations are used for the space $\cH_{-p}$. Then $\cH_{-p}^\otn$ is the dual space of $\cH_p^\otn$ with respect to $\cH^\otn$ and we again use $\lab\cdot,\cdot\rab$ for the dual pairing between these spaces. A simple way to define the tensor powers of $\cN$ is $\cN^\otn:=\bigcap_{p\in\N}\cH_p^\otn$ and equip this space with the projective topology. Then $(\cN^\otn)'=\bigcup_{p\in\N}\cH_{-p}^\otn$ can be equipped with the inductive topology.

To all the real spaces above we also consider their complexifications which will be distinguished by a subscript $\C$, e.g. the complexification of $\cH_p$ is $\cH_{p,\C}$ and so on. In the following we always identify $f = [f_1,f_2]\in\cH_{p,\C}, f_1,f_2\in\cH_p$ for $p\in\Z$ with $f=f_1 + if_2$. 

Similar as in the last section we have that the mapping 
\[ \cN\ni\varphi\mapsto\rmE_\beta\lb-\halb\lab\varphi,\varphi\rab\rb\in\R \]
is a characteristic function on $\cN$. Using the theorem of Bochner and Minlos \cite{BK95}, the following probability measure on $\cN'$, equipped with its cylindrical $\sigma$-algebra, can be defined:

\begin{definition}
For $\beta\in(0,1]$ the Mittag-Leffler measure is defined to be the unique probability measure $\mu_\beta$ on $\cN'$ such that
\[
	\int_{\cN'} \exp(\ii\lab\omega,\varphi\rab)\,\rmd\mu_\beta(\omega) = \rmE_\beta\lb-\tfrac{1}{2}\lab\varphi,\varphi\rab\rb
\]
for all $\varphi\in\cN$. The corresponding $L^p$ spaces of complex-valued functions are denoted by $L^p(\mu_\beta):=L^p(\cN',\mu_\beta;\C)$ for $p\geq 1$ with corresponding norms $\lVb\cdot\rVb_{L^p(\mu_\beta)}$.
\end{definition}

\begin{remark}\label{specialcases}
Several known measures are included in the class of Mittag-Leffler measures:
\begin{itemize}
	\item In the case $\beta=1$ we get the well-known standard Gaussian measure on $\cN'$.
	\item Choosing a concrete realization of the nuclear triple $\cN\subset\cH\subset\cN'$ as in \cite{MM09} the Mittag-Leffler measure becomes the grey noise measure. Moreover, a grey Brownian motion (introduced in \cite{Sch92, KS93} and also in \cite{MM09}) is given by the stochastic process
	\[
		[0,T]\ni t\mapsto B_t^\beta := \lab\cdot,\ind\rab.
	\]
\end{itemize} 
\end{remark}

By comparison of characteristic functions the following is obvious:

\begin{lemma}\label{imagemeasure}
Let $\varphi_1,\dots,\varphi_d\in\cN$ be orthonormal in $\cH$. Then the image measure of $\mu_\beta$ under the mapping $\cN'\ni\omega\mapsto\lb \lab\omega,\varphi_1\rab,\dots,\lab\omega,\varphi_d\rab\rb\in\Rd$ is the finite dimensional Mittag-Leffler measure $\mu_\beta^d$.
\end{lemma}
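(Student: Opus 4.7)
The plan is to verify the claim by the standard route of comparing characteristic functions, which is exactly the reason the statement is phrased as "obvious by comparison of characteristic functions". Since both the Mittag-Leffler measure $\mu_\beta$ on $\cN'$ and the finite-dimensional Mittag-Leffler measure $\mu_\beta^d$ on $\R^d$ are uniquely determined by their characteristic functions, it suffices to show that the image measure has the same characteristic function as $\mu_\beta^d$.

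First I would denote by $T\colon \cN'\to\R^d$ the mapping $T(\omega):=(\lab\omega,\varphi_1\rab,\dots,\lab\omega,\varphi_d\rab)$ and compute, for arbitrary $p=(p_1,\dots,p_d)\in\R^d$,
\[
 \int_{\R^d}\exp(\ii(p,x))\,\rmd(\mu_\beta\circ T^{-1})(x)
 =\int_{\cN'}\exp\!\bigl(\ii(p,T(\omega))\bigr)\,\rmd\mu_\beta(\omega).
\]
Next I would rewrite the inner product in the exponent, using the linearity of the dual pairing, as
\[
 (p,T(\omega))=\sum_{j=1}^d p_j\lab\omega,\varphi_j\rab = \Bigl\lab\omega,\sum_{j=1}^d p_j\varphi_j\Bigr\rab,
\]
and set $\varphi:=\sum_{j=1}^d p_j\varphi_j\in\cN$. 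Applying the defining property of $\mu_\beta$ then gives
\[
 \int_{\cN'}\exp(\ii\lab\omega,\varphi\rab)\,\rmd\mu_\beta(\omega)=\rmE_\beta\!\lb-\tfrac{1}{2}\lab\varphi,\varphi\rab\rb.
\]

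The small point to check carefully is the inner product $\lab\varphi,\varphi\rab$. Since $\varphi\in\cN\subset\cH$, the dual pairing coincides with the $\cH$-inner product, and by the assumed orthonormality of $\varphi_1,\dots,\varphi_d$ in $\cH$ we obtain
\[
 \lab\varphi,\varphi\rab=(\varphi,\varphi)=\sum_{j,k=1}^d p_jp_k(\varphi_j,\varphi_k)=\sum_{j=1}^d p_j^2=(p,p).
\]
Substituting this identity yields
\[
 \int_{\R^d}\exp(\ii(p,x))\,\rmd(\mu_\beta\circ T^{-1})(x)=\rmE_\beta\!\lb-\tfrac{1}{2}(p,p)\rb,
\]
which is precisely the characteristic function of $\mu_\beta^d$. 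The uniqueness part of the Bochner-Minlos / classical Bochner theorem then forces $\mu_\beta\circ T^{-1}=\mu_\beta^d$.

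There is no real obstacle in this argument; the only step one should not gloss over is the reduction of $\lab\varphi,\varphi\rab$ to $(p,p)$, since this is the single place where the orthonormality hypothesis enters. Everything else is a direct application of the definitions of the image measure, the dual pairing, and the Mittag-Leffler measure.
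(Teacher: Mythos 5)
Your proof is correct and is exactly the argument the paper has in mind: the paper simply remarks that the lemma is ``obvious by comparison of characteristic functions,'' and you have written out that comparison in full, with the orthonormality entering precisely where you say it does. No issues.
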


This directly implies

\begin{corollary}\label{moments2}
Let $\varphi\in\cN$ and $n\in\N$. Then
\[ \int_{\cN'} \lab\omega,\varphi\rab^{2n+1}\,\rmd\mu_\beta(\omega)=0 \]
and
\[ \int_{\cN'} \lab\omega,\varphi\rab^{2n}\,\rmd\mu_\beta(\omega)= \frac{(2n)!}{\Gamma(\beta n+1)2^n}\lab\varphi,\varphi\rab^n. \]
In particular $\lVb\lab\cdot,\varphi\rab\rVb_{L^2(\mu_\beta)}^2=\frac{1}{\Gamma(\beta+1)}\abs{\varphi}^2$.
\end{corollary}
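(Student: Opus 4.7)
The plan is to reduce the infinite dimensional integrals to the one-dimensional case via Lemma \ref{imagemeasure} and then read off the value from the moment formula in Lemma \ref{lem:moments}. The case $\varphi=0$ being trivial, assume $\varphi\neq 0$ and set $\tilde\varphi:=\varphi/\abs{\varphi}$, which is a unit vector in $\cH$ lying in $\cN$.

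By Lemma \ref{imagemeasure} applied to the single orthonormal vector $\tilde\varphi\in\cN$, the image measure of $\mu_\beta$ under the map $\cN'\ni\omega\mapsto\lab\omega,\tilde\varphi\rab\in\R$ is the one-dimensional Mittag-Leffler measure $\mu_\beta^1$. Using the change-of-variables formula with the monomial $x\mapsto x^n$ and the fact that $\lab\omega,\varphi\rab=\abs{\varphi}\lab\omega,\tilde\varphi\rab$, I would write
\[
\int_{\cN'}\lab\omega,\varphi\rab^n\,\rmd\mu_\beta(\omega)=\abs{\varphi}^n\int_{\cN'}\lab\omega,\tilde\varphi\rab^n\,\rmd\mu_\beta(\omega)=\abs{\varphi}^n\int_\R x^n\,\rmd\mu_\beta^1(x)=\abs{\varphi}^nM_1^\beta(n).
\]
Now Lemma \ref{lem:moments} (specialized to $d=1$) tells us that $M_1^\beta(n)=0$ when $n$ is odd, which yields the first claim, and that
\[
M_1^\beta(2n)=\frac{(2n)!\,n!}{2^n\,n!\,\Gamma(\beta n+1)}=\frac{(2n)!}{2^n\,\Gamma(\beta n+1)}.
\]
Substituting and noting that $\abs{\varphi}^{2n}=(\varphi,\varphi)^n=\lab\varphi,\varphi\rab^n$ (since $\varphi\in\cN\subset\cH$ and the dual pairing agrees with the inner product on $\cH$) gives the even moment formula. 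Finally, the particular case $\lVb\lab\cdot,\varphi\rab\rVb_{L^2(\mu_\beta)}^2=\abs{\varphi}^2/\Gamma(\beta+1)$ is just the $n=1$ instance of the even-moment identity.

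There is no real obstacle here: the proof is a two-step reduction, first to one dimension via the image measure lemma, then to a table lookup in Lemma \ref{lem:moments}. The only point that deserves care is the normalization step $\varphi=\abs{\varphi}\tilde\varphi$, which requires the harmless separate treatment of $\varphi=0$, and the identification of $\lab\varphi,\varphi\rab$ with $\abs{\varphi}^2$ under the Riesz embedding $\cN\subset\cH\subset\cN'$ as recalled at the beginning of Section \ref{Mittag-Leffler measure}.
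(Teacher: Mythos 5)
Your proof is correct and follows exactly the route the paper intends: the paper states the corollary as a direct consequence of Lemma \ref{imagemeasure} combined with the moment formula of Lemma \ref{lem:moments}, and you have simply spelled out the reduction (normalization of $\varphi$, passage to the image measure $\mu_\beta^1$, and the specialization $d=1$ of the moment formula) in full detail. No gaps.
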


\begin{remark}
Using Corollary \ref{moments2} it is possible to define $\lab\cdot,\eta\rab$ for $\eta\in\cH$ as the $L^2(\mu_\beta)$-limit of $\lab\cdot,\varphi_n\rab$, where $\lb\varphi_n\rb_{n\in\N}$ is a sequence in $\cN$ converging to $\eta$ in $\cH$.
\end{remark}

\begin{definition}
The space of smooth polynomials $\cP(\cN')$ is the space consisting of finite linear combinations of functions of the form $\lab\cdot,\xi\rab^n$, where $\xi\in\cN_\C$ and $n\in\N$. Every smooth polynomial $\varphi$ has a representation
\[ \varphi(\omega)=\sum_{n=0}^N\lab\omega^\otn,\varphi^{(n)}\rab, \]
where $N\in\N$ and $\varphi^{(n)}$ is a finite sum of elements of the form $\xi^\otn$, $\xi\in\cN_\C$.
\end{definition}

In Gaussian analysis, the Wick-ordered polynomials are a system of polynomials $I_n(\xi)\in\cP(\cN')$, $n\in\N$, such that $I_n(\xi)$ is a monic polynomial in $\lab\cdot,\xi\rab$ of degree $n$ with the property that $I_n(\xi)\perp I_m(\zeta)$ whenever $n\neq m$ or $\xi\perp\zeta$ in $\cH$. These polynomials give a useful orthogonal decomposition of the space $L^2(\mu_1)$ and allow to introduce spaces of test functions and distributions easily, see e.g. \cite{Kuo,HKPS}. Unfortunately, no analogon to these Wick polynomials exists for the measure $\mu_\beta$, $\beta\not=1$, which we show below in Theorem \ref{nowickpoly}.

\begin{lemma}\label{Lemma:uniqueness}
For every $\xi\in\cN$ there exists a unique system $I_n(\xi)\in\cP(\cN')$, $n\in\N$, such that $I_n(\xi)=p_{n,\xi}(\lab\cdot,\xi\rab)$ for some monic polynomial $p_{n,\xi}$ on $\R$ or $\C$ of degree $n$ with the property $I_n(\xi)\perp I_m(\xi)$ in $L^2(\mu_\beta)$ for $n\not=m$.
\end{lemma}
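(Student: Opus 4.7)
The plan is to reduce the construction to one-dimensional Gram–Schmidt orthogonalization using the image-measure description in Lemma \ref{imagemeasure}. I would first dispose of the trivial case $\xi=0$: here $\langle\cdot,\xi\rangle\equiv 0$, so $I_n(\xi)=p_{n,\xi}(0)$ is a constant, and orthogonality to $I_0(\xi)=1$ forces $I_n(\xi)=0$ for every $n\geq 1$, which is the unique such system (even though the polynomial $p_{n,0}$ itself is not unique). From now on assume $\xi\neq 0$.

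For existence I would apply Gram–Schmidt to the monomials $\langle\cdot,\xi\rangle^n$, $n\in\N$, inside $L^2(\mu_\beta)$. These functions belong to $L^2(\mu_\beta)$ by Corollary \ref{moments2}. For the process to be well defined, linear independence is needed: by Lemma \ref{imagemeasure} applied to $\xi/|\xi|\in\cN$, the pushforward of $\mu_\beta$ under $\omega\mapsto\langle\omega,\xi/|\xi|\rangle$ is $\mu_\beta^1$, which is absolutely continuous with respect to Lebesgue measure on $\R$ (Lemma \ref{mixingmeasure} exhibits $\mu_\beta^1$ as a mixture of centred Gaussians, hence of full support). Consequently any polynomial in $\langle\cdot,\xi\rangle$ that vanishes $\mu_\beta$-a.s.\ is the zero polynomial. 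Setting $I_0(\xi):=1$ and, for $n\geq 1$,
\[
I_n(\xi):=\langle\cdot,\xi\rangle^n-\sum_{k=0}^{n-1}\frac{\int_{\cN'}\langle\omega,\xi\rangle^n\,I_k(\xi)(\omega)\,\rmd\mu_\beta(\omega)}{\|I_k(\xi)\|_{L^2(\mu_\beta)}^2}\,I_k(\xi),
\]
the denominators are nonzero, the leading monic term is preserved, and the orthogonality $I_n(\xi)\perp I_m(\xi)$ for $n\neq m$ is built into the construction. Hence $I_n(\xi)=p_{n,\xi}(\langle\cdot,\xi\rangle)$ for a monic polynomial $p_{n,\xi}$ of degree $n$.

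For uniqueness, suppose $\tilde I_n(\xi)=\tilde p_{n,\xi}(\langle\cdot,\xi\rangle)$ is a second candidate. Since $p_{n,\xi}$ and $\tilde p_{n,\xi}$ are both monic of degree $n$, the leading terms cancel, so $I_n(\xi)-\tilde I_n(\xi)=(p_{n,\xi}-\tilde p_{n,\xi})(\langle\cdot,\xi\rangle)$ lies in $\mathrm{span}\{\langle\cdot,\xi\rangle^k:0\leq k\leq n-1\}=\mathrm{span}\{I_0(\xi),\ldots,I_{n-1}(\xi)\}$. On the other hand both $I_n(\xi)$ and $\tilde I_n(\xi)$ are orthogonal to this span, so the difference is orthogonal to itself and therefore vanishes in $L^2(\mu_\beta)$; linear independence then forces $p_{n,\xi}=\tilde p_{n,\xi}$ and hence $I_n(\xi)=\tilde I_n(\xi)$. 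The only real subtlety — the main obstacle, such as it is — is justifying the linear independence of the monomials, which hinges on the absolute continuity of $\mu_\beta^1$ from Lemma \ref{mixingmeasure}; the rest is classical Gram–Schmidt.
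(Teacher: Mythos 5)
Your proof is correct and follows essentially the same route as the paper: reduction to the one-dimensional measure $\mu_\beta^1$ via Lemma \ref{imagemeasure}, Gram--Schmidt on the monomials for existence (the paper writes the result explicitly as $\abs{\xi}^n H_n^\beta(\lab\cdot,\xi\rab/\abs{\xi})$), and for uniqueness the observation that a monic polynomial of degree $n$ orthogonal to the span of the lower-degree system is uniquely determined. Your explicit justification of the linear independence of the monomials in $L^2(\mu_\beta)$ is a detail the paper leaves implicit, but it does not change the argument.
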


\begin{proof}
By Lemma \ref{imagemeasure} it is obvious that the system
\[ I_n(\xi):=\begin{cases}
1 & \text{if }n=0,\\
0 & \text{if }n\not=0,\,\xi=0,\\
\abs{\xi}^n H_n^\beta\bigl(\frac{\lab\cdot,\xi\rab}{\abs{\xi}}\bigr) & \text{if }n\not=0,\,\xi\not=0,
\end{cases} \]
fulfills the desired properties. When $\xi=0$, uniqueness is clear. To see uniqueness for $\xi\neq 0$, note that the function being constantly one is the only monic polynomial of degree $0$. To finish the induction, assume that $I_0(\xi),\dotsc,I_n(\xi)$ are uniquely determined. The affine space of monic polynomials in $\lab\cdot,\xi\rab$ of degree $n+1$ is given by
\[ \lab\cdot,\xi\rab^{n+1}+\spann\{I_0(\xi),\dotsc,I_n(\xi)\}, \]
hence the linear independent conditions $I_{n+1}(\xi)\perp I_k(\xi)$, $k=0,\dotsc,n$, uniquely determine an element from that space.
\end{proof}

\begin{theorem}\label{nowickpoly}
Let $\beta\in(0,1)$ be given and assume $\dim\cH\ge 2$. Then, for the system $I_n(\xi)$ as in Lemma \ref{Lemma:uniqueness}, there exist $n\not=m$ and $\xi,\zeta\in\cN$ such that $I_n(\xi)\not\perp I_m(\zeta)$ in $L^2(\mu_\beta)$.
\end{theorem}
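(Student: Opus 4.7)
The plan is to leverage Corollary \ref{cor:notorthogonal}, which already establishes non-orthogonality of $H_4^\beta$ and $H_2^\beta$ against the two-dimensional measure $\mu_\beta^2$, and to lift this to the infinite-dimensional setting via Lemma \ref{imagemeasure}. The natural choice is therefore $n=4$, $m=2$, with $\xi,\zeta\in\cN$ chosen to be orthonormal in $\cH$.

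First I would produce the required orthonormal pair. Since $\dim\cH\ge 2$ and $\cN$ is dense in $\cH$, the space $\cN$ cannot be contained in any one-dimensional subspace of $\cH$, so it contains two $\cH$-linearly independent vectors. Applying the Gram--Schmidt procedure with respect to $(\cdot,\cdot)$ to these vectors yields $\varphi_1,\varphi_2\in\cN$ that are orthonormal in $\cH$.

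Next I would set $\xi:=\varphi_1$ and $\zeta:=\varphi_2$. Since $|\xi|=|\zeta|=1$, the explicit formula from Lemma \ref{Lemma:uniqueness} gives $I_4(\xi)=H_4^\beta(\lab\cdot,\varphi_1\rab)$ and $I_2(\zeta)=H_2^\beta(\lab\cdot,\varphi_2\rab)$. By Lemma \ref{imagemeasure}, the image measure of $\mu_\beta$ under $\omega\mapsto(\lab\omega,\varphi_1\rab,\lab\omega,\varphi_2\rab)$ is precisely $\mu_\beta^2$, so the change-of-variables formula yields
\[
\int_{\cN'}I_4(\xi)I_2(\zeta)\,\rmd\mu_\beta=\int_{\R^2}H_4^\beta(x)H_2^\beta(y)\,\rmd\mu_\beta^2(x,y).
\]
By Corollary \ref{cor:notorthogonal}, the right-hand side is nonzero for $\beta\in(0,1)$, hence $I_4(\xi)\not\perp I_2(\zeta)$ in $L^2(\mu_\beta)$, as desired.

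There is essentially no serious obstacle: the theorem reduces immediately to the two-dimensional computation already carried out. The only minor point worth checking carefully is that the orthonormal system can be chosen inside the nuclear space $\cN$ rather than merely in $\cH$, but this is handled by the density argument above.
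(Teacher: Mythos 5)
Your proposal is correct and follows essentially the same route as the paper's own proof: take $\xi,\zeta\in\cN$ orthonormal in $\cH$, use Lemma \ref{imagemeasure} to reduce $\int_{\cN'}I_4(\xi)\overline{I_2(\zeta)}\,\rmd\mu_\beta$ to the two-dimensional integral, and invoke Corollary \ref{cor:notorthogonal}. The only difference is that you additionally spell out why an $\cH$-orthonormal pair can be found inside $\cN$, a point the paper leaves implicit.
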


\begin{proof}
Let $\xi,\zeta\in\cN$ be orthonormal in $\cH$. Then
\[ \int_{\cN'}I_4(\xi)\overline{I_2(\zeta)}\,\rmd\mu_\beta=\int_{\R^2}H_4^\beta(x)H_2^\beta(y)\,\rmd\mu_\beta^2(x,y)\neq 0, \]
by Corollary \ref{cor:notorthogonal}.
\end{proof}

\section{Appell Systems}
\noindent Wick-ordered polynomials play an important role in Gaussian analysis since they form an orthonormal system with respect to the $L^2$-scalar product. Thereby one obtains the Wiener-It\^o-Chaos decomposition. In non-Gaussian analysis one uses the so-called Appell systems, which are biorthogonal systems of polynomials. Such systems were first proposed by Daletskii in \cite{Da91} for probability measures with smooth logarithmic derivative. More details and results, including characterization theorems, were obtained in \cite{ADKS96}. In the following we give the construction from \cite{KSWY98}, where the conditions on the measure $\mu$ could be weakened. We also refer to \cite{KK99} for further results. Again we work on the same nuclear triple 
\[
 \cN\subset\cH\subset\cN'
\]
as described in the beginning of Section \ref{Mittag-Leffler measure} and we consider a measure $\mu$ defined on the cylinder $\sigma$-algebra of $\cN'$. This measure shall fulfill the following two properties:
\begin{enumerate}
	\item[(A1)] The measure $\mu$ has an analytic Laplace transform in a neighborhood of zero, i.e. the mapping
	\[
		\cN_\C\ni\varphi\mapsto l_\mu(\varphi) = \int_{\cN'} \exp(\lab\omega,\varphi\rab) \, \rmd\mu(\omega) \in\C
	\]
	is holomorphic in a neighborhood $\cU\subset\cN_\C$ of zero.
	\item[(A2)] For any nonempty open subset $\cU\subset\cN'$ it should hold that $\mu(\cU)> 0$.

\end{enumerate}
Instead of (A2) \cite{KSWY98} require $\mu$ to be non-degenerate, which is a weaker assumption. But in \cite{KK99} it is shown that this assumption is not sufficient to guarantee the embedding of test functions into the space of square integrable function.

In the sequel we show that the Mittag-Leffler measures $\mu_\beta$, $0<\beta<1$, satisfy (A1) and (A2).

\begin{lemma}\label{expint}
For $\varphi\in\cN$ and $\lambda\in\R$ the exponential function $\cN'\ni\omega\mapsto\e^{\abs{\lambda\lab\omega,\varphi\rab}}$ is integrable and
\[
	\int_{\cN'} \e^{\lambda\lab\omega,\varphi\rab}\,\rmd\mu_\beta(\omega) = \rmE_\beta\lb\halb \lambda^2\lab\varphi,\varphi\rab\rb .
\]
\end{lemma}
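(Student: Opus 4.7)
The plan is to expand the exponential into a power series, integrate term by term using Corollary \ref{moments2}, and recognize the resulting series as a Mittag-Leffler function evaluated at $\tfrac{1}{2}\lambda^2\lab\varphi,\varphi\rab$. The only real work is to justify absolute integrability and the interchange of sum and integral; once that is in hand the identity falls out immediately.

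First I would handle the integrability of $\e^{\abs{\lambda\lab\omega,\varphi\rab}}$. The trick is to bound it by (twice) the hyperbolic cosine, $\e^{\abs{y}}\le \e^y+\e^{-y}=2\cosh(y)$, so it suffices to show that $\cosh(\lambda\lab\cdot,\varphi\rab)$ is integrable. Writing $\cosh$ as its power series, whose terms are all non-negative, and using monotone convergence on the partial sums together with the even-moment formula of Corollary \ref{moments2}, I would obtain
\[
\int_{\cN'}\cosh\lb\lambda\lab\omega,\varphi\rab\rb\,\rmd\mu_\beta(\omega)
=\sum_{n=0}^\infty\frac{\lambda^{2n}}{(2n)!}\cdot\frac{(2n)!}{\Gamma(\beta n+1)2^n}\lab\varphi,\varphi\rab^n
=\rmE_\beta\lb\tfrac{1}{2}\lambda^2\lab\varphi,\varphi\rab\rb,
\]
which is finite because $\rmE_\beta$ is entire. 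Hence $\e^{\abs{\lambda\lab\cdot,\varphi\rab}}\in L^1(\mu_\beta)$.

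With absolute integrability established, I can apply Fubini (with counting measure on $\N$) or dominated convergence to the partial sums of $\sum_{n=0}^\infty\frac{\lambda^n}{n!}\lab\omega,\varphi\rab^n$ in order to exchange integration and summation:
\[
\int_{\cN'}\e^{\lambda\lab\omega,\varphi\rab}\,\rmd\mu_\beta(\omega)
=\sum_{n=0}^\infty\frac{\lambda^n}{n!}\int_{\cN'}\lab\omega,\varphi\rab^n\,\rmd\mu_\beta(\omega).
\]
By Corollary \ref{moments2} the odd moments vanish and the even ones are explicit, so the right-hand side reduces to
\[
\sum_{n=0}^\infty\frac{\lambda^{2n}}{(2n)!}\cdot\frac{(2n)!}{\Gamma(\beta n+1)2^n}\lab\varphi,\varphi\rab^n
=\sum_{n=0}^\infty\frac{\lb\tfrac{1}{2}\lambda^2\lab\varphi,\varphi\rab\rb^n}{\Gamma(\beta n+1)}
=\rmE_\beta\lb\tfrac{1}{2}\lambda^2\lab\varphi,\varphi\rab\rb,
\]
which is the claimed identity.

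The main (and essentially only) obstacle is arranging the convergence argument cleanly, since the summands $\frac{\lambda^n}{n!}\lab\omega,\varphi\rab^n$ are signed. The $\cosh$-bound circumvents this by providing a non-negative, integrable majorant whose integral is already the target Mittag-Leffler value; once that is observed, everything else is a routine swap of sum and integral followed by inserting the moment formula.
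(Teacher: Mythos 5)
Your proof is correct and pursues the same overall strategy as the paper's --- expand the exponential, integrate term by term via Corollary \ref{moments2}, and recognize the Mittag-Leffler series --- but you construct the integrable majorant differently, and more cleanly. The paper takes $g_N=\sum_{n=0}^N\frac{1}{n!}\abs{\lab\cdot,\lambda\varphi\rab}^n$ as its monotone sequence, which forces it to treat the odd absolute moments $\int_{\cN'}\abs{\lab\omega,\varphi\rab}^{2n+1}\,\rmd\mu_\beta$ separately via Cauchy--Schwarz together with $ab\le\tfrac{1}{2}(a^2+b^2)$, producing two auxiliary Mittag-Leffler-type series to control the odd part. Your bound $\e^{\abs{y}}\le\e^{y}+\e^{-y}=2\cosh(y)$ eliminates the odd powers at the outset, so the majorant's integral follows in one line from the even-moment formula and equals $2\rmE_\beta\bigl(\tfrac{1}{2}\lambda^2\lab\varphi,\varphi\rab\bigr)$ (twice the target value, not the value itself as your closing remark suggests --- a harmless slip). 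The subsequent interchange of sum and integral is justified identically in both arguments, by dominated convergence with $\e^{\abs{\lambda\lab\cdot,\varphi\rab}}$ as dominating function, after which the vanishing of the odd moments and the even-moment formula give the identity. What your version buys is the complete avoidance of the odd-moment estimate; it costs nothing, and both proofs are complete.
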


\begin{proof}
Obviously, the mapping is measurable.
Define the monotone increasing sequence $g_N = \sum_{n=0}^N \frac{1}{n!} \abs{\lab\cdot,\lambda\varphi\rab}^n$. Then we have by Corollary \ref{moments2} that $g_N$ is integrable. Splitting $g_N$ in 
\[
	g_N	= \sum_{n=0}^{\lfloor N/2\rfloor} \frac{1}{(2n)!} \abs{\lab\cdot,\lambda\varphi\rab}^{2n} + \sum_{n=0}^{\lceil N/2\rceil-1} \frac{1}{(2n+1)!} \abs{\lab\cdot,\lambda\varphi\rab}^{2n+1}
\]
we find that the even part converges to $\rmE_\beta(\tfrac{\lambda^2}{2}\lab\varphi,\varphi\rab)$ by Corollary \ref{moments2}. For the odd part, we use the Cauchy-Schwarz inequality and $ab\leq\tfrac{1}{2}(a^2+b^2)$ and again obtain two sums which converge to the Mittag-Leffler function. Thus integrability follow by monotone convergence and
\[
	\int_{\cN'} \e^{\abs{\lambda\lab\omega,\varphi\rab}}\,\rmd\mu_\beta(\omega) = \lim_{N\to\infty} \int_{\cN'} g_N(\omega)\,\rmd\mu_\beta(\omega) < \infty.
\]
Moreover we have by dominated convergence and Corollary \ref{moments2}
\begin{align*}
	\int_{\cN'} \e^{\lambda\lab\omega,\varphi\rab}\,\rmd\mu_\beta(\omega) = \sum_{n=0}^\infty \frac{\lambda^{2n}}{2^n\Gamma(\beta n+1)}\lab\varphi,\varphi\rab^n =\rmE_\beta\lb\halb\lambda^2\lab\varphi,\varphi\rab\rb.
\end{align*}
\end{proof}

Now we show that the Mittag-Leffler measure fulfills (A1):
\begin{theorem}
The mapping
\[
	\cN_\C\ni\varphi\mapsto l_{\mu_\beta}(\varphi)=\int_{\cN'} \e^{\lab\omega,\varphi\rab}\,\rmd\mu_\beta(\omega)\in\C
\]
is a holomorphic mapping from $\cN_\C$ to $\C$.
\end{theorem}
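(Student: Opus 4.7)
The plan is to verify that $l_{\mu_\beta}$ is well-defined on $\cN_\C$, show it is G\^ateaux holomorphic, and establish local boundedness; by the standard characterization of holomorphy on locally convex spaces (G\^ateaux holomorphy plus local boundedness, equivalently continuity), this will give the result.

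First, I would reduce to the real-exponent case. Decompose $\varphi=\varphi_1+\ii\varphi_2$ with $\varphi_1,\varphi_2\in\cN$. Then $\abs{\e^{\lab\omega,\varphi\rab}}=\e^{\lab\omega,\varphi_1\rab}\le\e^{\abs{\lab\omega,\varphi_1\rab}}$, which is $\mu_\beta$-integrable by Lemma \ref{expint}. Hence $l_{\mu_\beta}(\varphi)$ is well defined on all of $\cN_\C$, with the explicit bound
\[
	\abs{l_{\mu_\beta}(\varphi)}\le\int_{\cN'}\e^{\abs{\lab\omega,\varphi_1\rab}}\,\rmd\mu_\beta(\omega)=\rmE_\beta\lb\halb\lab\varphi_1,\varphi_1\rab\rb.
\]

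Next I would establish G\^ateaux holomorphy. Fix $\varphi_0,\psi\in\cN_\C$ and set $g(z):=l_{\mu_\beta}(\varphi_0+z\psi)$ for $z\in\C$. For $\abs{z}\le R$, writing $\varphi_0=\varphi_{0,1}+\ii\varphi_{0,2}$ and $\psi=\psi_1+\ii\psi_2$, the estimate
\[
	\abs{\e^{\lab\omega,\varphi_0+z\psi\rab}}\le\e^{\abs{\lab\omega,\varphi_{0,1}\rab}+R\abs{\lab\omega,\psi_1\rab}+R\abs{\lab\omega,\psi_2\rab}}
\]
together with H\"older's inequality and Lemma \ref{expint} supplies an integrable majorant uniformly in $\abs{z}\le R$. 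For any closed triangle $\gamma\subset\{\abs{z}<R\}$, Fubini's theorem gives
\[
	\oint_\gamma g(z)\,\rmd z=\int_{\cN'}\oint_\gamma\e^{\lab\omega,\varphi_0+z\psi\rab}\,\rmd z\,\rmd\mu_\beta(\omega)=0,
\]
since the integrand is entire in $z$ for fixed $\omega$. By Morera's theorem $g$ is entire, so $l_{\mu_\beta}$ is G\^ateaux holomorphic on $\cN_\C$.

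For local boundedness, observe that the bound from the first step reads $\abs{l_{\mu_\beta}(\varphi)}\le\rmE_\beta\bigl(\halb\abs{\varphi_1}^2\bigr)$. Since $\abs{\varphi_1}=\abs{\varphi_1}_0\le\abs{\varphi}_{p,\C}$ for every $p\in\N$, any bounded set in $\cN_\C$ maps into a bounded set in $\C$, so $l_{\mu_\beta}$ is locally bounded (hence continuous on the Silva-bornological space $\cN_\C$). Combining G\^ateaux holomorphy with local boundedness yields holomorphy on $\cN_\C$ by the standard theorem on holomorphic functions on nuclear spaces (see e.g.\ the references cited after (A1)--(A2)).

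The main obstacle I expect is the interchange of integral and contour integral in the Morera step: one must produce the $z$-uniform integrable majorant carefully, since the linear functional $\lab\omega,\cdot\rab$ is only defined $\mu_\beta$-almost everywhere and its absolute value is not linear. Splitting $\varphi$ and $\psi$ into real and imaginary parts and exploiting H\"older's inequality with Lemma \ref{expint} is what makes this work.
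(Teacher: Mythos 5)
Your proposal is correct and follows essentially the same route as the paper: well-definedness and local boundedness via the bound $\abs{l_{\mu_\beta}(\varphi)}\le\rmE_\beta\bigl(\halb\lab\varphi_1,\varphi_1\rab\bigr)$ from Lemma \ref{expint}, G\^ateaux holomorphy via Fubini and Morera on lines $z\mapsto\varphi_0+z\psi$, and then the standard result that G-holomorphy plus local boundedness implies holomorphy. Your explicit use of H\"older's inequality to assemble the $z$-uniform integrable majorant is a welcome bit of extra care that the paper leaves implicit.
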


\begin{proof}
Lemma \ref{expint} shows that $l_{\mu_\beta}$ is locally bounded. In fact we get for $\varphi\in\cN_\C$, where $\varphi=\xi_1+i\xi_2$ with $\xi_1,\xi_2\in\cN$
\begin{align*}
	\abs{l_{\mu_\beta}(\varphi)} \leq \int_{\cN'} \e^{\lab\omega,\xi_1\rab}\,\rmd\mu_\beta(\omega) = \rmE_\beta\lb\halb\lab\xi_1,\xi_1\rab\rb <\infty.
\end{align*}
Consider now the mapping $\C\ni z\mapsto f(z):=l_{\mu_\beta}(\varphi_0+z\varphi)$, $\varphi_0,\varphi\in\cN_\C$. We show that $f$ is analytic in $z$. First we remark that $f$ is continuous. To see this, choose $z\in\C$ and a sequence $\lb z_n\rb_{n\in\N}$ in $\C$ with $z_n\to z$ as $n\to\infty$. Then it holds that for sufficiently large $n\in\N$:
\[
	\abs{\exp(\lab\omega,\varphi_0+z_n\varphi\rab)} \leq \exp(\abs{\lab\omega,\varphi_0\rab})\exp\lb (\abs{z}+1)\abs{\lab\omega,\varphi\rab}\rb.
\]
Now continuity follows by Lebesgue's dominated convergence.

Next let $\gamma$ be a closed and bounded curve in $\C$. Since $\gamma$ is compact we can use Fubini and get
\[
	\int_\gamma \int_{\cN'} \exp(\lab\omega,\varphi_0+z\varphi\rab)\,\rmd\mu_\beta(\omega)\,\rmd z = \int_{\cN'} \int_\gamma \exp(\lab\omega,\varphi_0+z\varphi\rab) \,\rmd z\,\rmd\mu_\beta(\omega) = 0,
\]
because the exponential function is holomorphic. Hence we have by Morera's theorem that $f$ is holomorphic in $\C$. This shows that $l_{\mu_\beta}$ is G-holomorphic, which implies holomorphy, see \cite{Di81}. 
\end{proof}

Using the identity principle from complex analysis, we obtain the following corollary:
\begin{corollary}
For each $z\in\C$ and $\theta\in\cN_\C$ we have that
\[
	\int_{\cN'} \exp(z\lab\omega,\theta\rab)\,\rmd\mu_\beta(\omega) = \rmE_\beta\lb\halb z^2\lab\theta,\theta\rab\rb.
\]
\end{corollary}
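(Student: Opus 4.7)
The plan is to reduce the identity to the real case handled by Lemma~\ref{expint} and then extend by the identity principle applied to a single complex variable. Since $z\theta\in\cN_\C$ for any $z\in\C$ and $\theta\in\cN_\C$, and since the dual pairing is bilinear (not sesquilinear) so that $\lab z\theta,z\theta\rab = z^2\lab\theta,\theta\rab$, it suffices to prove
\[
  l_{\mu_\beta}(\psi) = \rmE_\beta\!\lb\tfrac{1}{2}\lab\psi,\psi\rab\rb
\]
for every $\psi\in\cN_\C$; the stated corollary then follows by applying this with $\psi=z\theta$.

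Writing $\psi = \xi_1 + i\xi_2$ with $\xi_1,\xi_2\in\cN$, the natural idea is to interpolate holomorphically along the complex line $w\mapsto \xi_1+w\xi_2$, which lies in the real subspace $\cN$ for $w\in\R$ and hits $\psi$ at $w=i$. Concretely, I would introduce the two candidate functions
\[
  F(w) := l_{\mu_\beta}(\xi_1 + w\xi_2), \qquad G(w) := \rmE_\beta\!\lb\tfrac{1}{2}\lab\xi_1+w\xi_2,\xi_1+w\xi_2\rab\rb, \quad w\in\C.
\]
Both are entire functions of $w$: $F$ as the composition of the holomorphic Laplace transform from the preceding theorem with the affine map $w\mapsto\xi_1+w\xi_2$, and $G$ because $\rmE_\beta$ is entire while $w\mapsto \lab\xi_1+w\xi_2,\xi_1+w\xi_2\rab=\lab\xi_1,\xi_1\rab+2w\lab\xi_1,\xi_2\rab+w^2\lab\xi_2,\xi_2\rab$ is a polynomial of degree two in $w$.

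For real $w=s\in\R$ one has $\xi_1+s\xi_2\in\cN$, so Lemma~\ref{expint} applied with $\lambda=1$ and $\varphi=\xi_1+s\xi_2$ yields $F(s)=G(s)$. Thus $F$ and $G$ are entire functions that agree on $\R$, which has accumulation points in $\C$; the identity principle therefore forces $F\equiv G$ on $\C$, and evaluating at $w=i$ gives the required identity at $\psi$.

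The only point where care is needed is keeping track of the bilinear (rather than Hermitian) nature of the complexified pairing, which is what makes the right-hand side a holomorphic quadratic form in $\psi$ and hence a polynomial in $w$; the genuine analytic work has already been absorbed into the preceding holomorphy theorem and Lemma~\ref{expint}, leaving only a one-variable identity-principle argument to execute, so I do not anticipate a serious obstacle.
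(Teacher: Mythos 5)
Your proof is correct and follows exactly the route the paper intends: the paper gives no written proof beyond the remark that the corollary follows ``using the identity principle,'' and your argument --- reducing to $l_{\mu_\beta}(\psi)=\rmE_\beta\lb\tfrac{1}{2}\lab\psi,\psi\rab\rb$, interpolating along $w\mapsto\xi_1+w\xi_2$, invoking Lemma~\ref{expint} on the real axis, and applying the one-variable identity principle --- is precisely the careful execution of that remark. Your attention to the bilinearity (rather than sesquilinearity) of the complexified pairing is also the right point to flag.
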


Now we show that $\mu_\beta$ satisfies (A2). In fact, we prove a statement which is a little bit stronger. For this, we need a refined result of Lemma \ref{mixingmeasure} which shows that the Mittag-Leffler measure is an elliptically contoured measure, see \cite{Mis84}. We give a precise proof of this statement using Lemma 452A from \cite{F06}.

\begin{theorem}
Let $\nu_\beta$ be the probability measure on $(0,\infty)$ as in Lemma \ref{mixingmeasure}. Then
\begin{equation}\label{eq:mixing}
	\mu_\beta=\int_0^\infty\mu^{(s)}\,\rmd\nu_\beta(s), 
\end{equation}
where $\mu^{(s)}$ denotes the centered Gaussian measure on $\cN'$ with variance $s\ge 0$, i.e.
\[ \int_{\cN'}\exp(\ii\lab\omega,\xi\rab)\,\rmd\mu^{(s)}(\omega)=\exp\Bigl(-\frac{s}{2}\lab\xi,\xi\rab\Bigr),\quad\xi\in\cN. \]
\end{theorem}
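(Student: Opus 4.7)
The plan is to compute the characteristic functional of the right-hand side of \eqref{eq:mixing} and match it against the defining characteristic functional of $\mu_\beta$; by Bochner--Minlos the two measures must then coincide. So I would define $\tilde\mu := \int_0^\infty \mu^{(s)}\,\rmd\nu_\beta(s)$ and aim to show $\int_{\cN'} \e^{\ii\lab\omega,\xi\rab}\,\rmd\tilde\mu(\omega) = \rmE_\beta\lb-\halb\lab\xi,\xi\rab\rb$ for every $\xi\in\cN$.

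The first task is to show that $\tilde\mu$ is a well-defined probability measure on the cylinder $\sigma$-algebra of $\cN'$. For this, I need the map $s\mapsto \mu^{(s)}(A)$ to be $\nu_\beta$-measurable for every cylinder-measurable $A\subset\cN'$. I would first verify this on cylinder sets: for $A = \{\omega \mid (\lab\omega,\varphi_1\rab,\dots,\lab\omega,\varphi_d\rab)\in B\}$ with $B\in\cB(\R^d)$ and $\varphi_i\in\cN$, the quantity $\mu^{(s)}(A)$ equals the centered Gaussian measure on $\R^d$ with covariance $s(\varphi_i,\varphi_j)$ evaluated on $B$, and this depends measurably (indeed continuously) on $s$. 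A monotone class argument then extends measurability to the whole cylinder $\sigma$-algebra. Countable additivity of $\tilde\mu$ follows by monotone convergence applied under the $\rmd\nu_\beta$-integral, and $\tilde\mu(\cN') = \nu_\beta((0,\infty)) = 1$. This is exactly the kind of disintegration/mixing construction covered by Lemma~452A in \cite{F06}, which the authors cite.

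Once $\tilde\mu$ is in place, I would apply Fubini's theorem, which is justified because $|\e^{\ii\lab\omega,\xi\rab}|=1$ so the integrand is bounded and the product measure is finite:
\[
\int_{\cN'}\e^{\ii\lab\omega,\xi\rab}\,\rmd\tilde\mu(\omega) = \int_0^\infty\int_{\cN'}\e^{\ii\lab\omega,\xi\rab}\,\rmd\mu^{(s)}(\omega)\,\rmd\nu_\beta(s) = \int_0^\infty\exp\lb-\tfrac{s}{2}\lab\xi,\xi\rab\rb\,\rmd\nu_\beta(s).
\]
By Lemma \ref{mixingmeasure} applied with $t = \tfrac{1}{2}\lab\xi,\xi\rab\ge 0$, the last integral equals $\rmE_\beta\lb-\halb\lab\xi,\xi\rab\rb$, which is precisely the characteristic functional of $\mu_\beta$. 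The uniqueness part of Bochner--Minlos then gives $\tilde\mu = \mu_\beta$.

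The main obstacle I anticipate is the first step: namely, making the mixture $\int_0^\infty \mu^{(s)}\,\rmd\nu_\beta(s)$ a bona fide measure on the cylinder $\sigma$-algebra of the infinite-dimensional space $\cN'$. The computation of the characteristic functional itself is a routine application of Fubini together with Lemma \ref{mixingmeasure}; all the measure-theoretic subtlety sits in verifying the kernel property of $s\mapsto\mu^{(s)}$ and in invoking the Fremlin lemma correctly, which is why the authors explicitly flag that reference.
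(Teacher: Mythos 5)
Your proposal is correct and follows essentially the same route as the paper: verify measurability of $s\mapsto\mu^{(s)}(E)$ on cylinder sets (the paper reduces to linearly independent $\xi_1,\dotsc,\xi_d$ and writes the Gaussian density explicitly with the Gram matrix), invoke Lemma~452A of Fremlin to obtain the mixture measure, and then identify the characteristic functional via Fubini and Lemma~\ref{mixingmeasure}, concluding by uniqueness in Bochner--Minlos.
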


\begin{proof}
First we need to show that the right-hand side of \eqref{eq:mixing} is indeed a probability measure. Let $\cE$ be the system of cylinder sets in $\cN'$. It generates the $\sigma$-algebra on $\cN'$ and is stable with respect to finite intersections. According to Lemma 452A in \cite{F06} it suffices to show the measurability of
\[ (0,\infty)\ni s\mapsto\mu^{(s)}(E)\in[0,1] \]
for all $E\in\cE$. By definition for any cylinder set $E\in\cE$ there exists $d\in\N$, $\xi_1,\dotsc,\xi_d\in\cN$ and measurable $C_1,\dotsc,C_d\subset\R$, such that
\[ E=\{\omega\in\cN':\lab\omega,\xi_k\rab\in C_k, k=1,\dotsc,d\}. \]
Without loss of generality, we can assume that $\xi_1,\dots,\xi_d$ are linearly independent. 
Let now $M:=(\lab\xi_k,\xi_l\rab)_{k,l=1,\dotsc,d}$ be the Gram-Matrix of $\xi_1,\dotsc,\xi_d$, which is positive definite due to linear independence. Then, by passing to the image measure,
\begin{align*}
\mu^{(s)}(E)
 & = \int_{\cN'} \1_{\lab\cdot,\xi_1\rab\in C_1} \cdot\ldots\cdot \1_{\lab\cdot,\xi_d\rab\in C_d} \,\rmd\mu^{(s)}\\
 & = \frac{1}{\sqrt{(2\pi)^d\det (sM)}}\int_{C_1\times\dotsb\times C_d}\exp\Bigl(-\frac{1}{2s}(x,M^{-1}x)_{\R^d}\Bigr)\,\rmd x
\end{align*}
for all $s\in(0,\infty)$. By continuity of the integrand and Lebesgue's dominated convergence theorem this is a continuous and hence measurable function in $s\in(0,\infty)$. Hence $\nu:=\int_0^\infty\mu^{(s)}\,\rmd\nu_\beta(s)$ is a probability measure on $\cN'$ by Lemma 452A in \cite{F06}. It follows
\begin{align*}
\int_{\cN'}\exp(\ii\lab\omega,\xi\rab)\,\rmd\nu(\omega)
 & = \int_0^\infty\int_{\cN'}\exp(\ii\lab\omega,\xi\rab)\,\rmd\mu^{(s)}(\omega)\,\rmd\nu_\beta(s)\\
 & = \int_0^\infty\exp\Bigl(-\frac{s}{2}\lab\xi,\xi\rab\Bigr)\,\rmd\nu_\beta(s)\\
 & = \rmE_\beta\Bigl(-\halb\lab\xi,\xi\rab\Bigr)
\end{align*}
for all $\xi\in\cN$ and this implies $\mu_\beta=\nu$.
\end{proof}

\begin{theorem}
$\mu_\beta$ satisfies assumption (A2) for all $0<\beta<1$.
\end{theorem}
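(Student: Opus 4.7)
The plan is to combine the mixing representation $\mu_\beta = \int_0^\infty \mu^{(s)}\,\rmd\nu_\beta(s)$ just established with the classical full-support property of nondegenerate Gaussian measures on $\cN'$. Given any nonempty open set $\cU\subset\cN'$, I would write
\[
\mu_\beta(\cU) = \int_0^\infty \mu^{(s)}(\cU)\,\rmd\nu_\beta(s)
\]
and reduce the statement to showing that the integrand is strictly positive for a $\nu_\beta$-nonnegligible set of $s\in(0,\infty)$ — in fact I will aim at positivity for every $s>0$. Measurability of $s\mapsto\mu^{(s)}(\cU)$ is not a serious obstacle: one can approximate $\cU$ from inside by a countable increasing union of cylinder sets (whose $\mu^{(s)}$-measure is continuous in $s$, as displayed in the previous proof) and pass to the limit.

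Next, for each fixed $s>0$, I would argue that the centered Gaussian measure $\mu^{(s)}$ has topological support equal to all of $\cN'$. The reproducing kernel (Cameron-Martin) space of $\mu^{(s)}$ is $\cH$ (as a set, independent of $s>0$), which is dense in $\cN'$ because $\cN\subset\cH\subset\cN'$ is a Gelfand triple with dense inclusions. By the classical theorem that the topological support of a centered Gaussian measure on a locally convex space coincides with the closure of its Cameron-Martin space, one has $\mathrm{supp}(\mu^{(s)}) = \overline{\cH} = \cN'$, so that $\mu^{(s)}(\cU) > 0$ for every nonempty open $\cU$.

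Putting these two steps together, since $\nu_\beta$ is a probability measure on $(0,\infty)$ and $s\mapsto\mu^{(s)}(\cU)$ is strictly positive, the mixing integral $\mu_\beta(\cU)$ is strictly positive, proving (A2). The main obstacle is justifying the full-support claim in the inductive limit topology of $\cN'$; the cleanest route is to appeal to the general Gaussian support theorem (e.g.\ the formulation in Bogachev's monograph on Gaussian measures). A more hands-on alternative, essentially self-contained given what has already been shown, is to verify directly that every nonempty open $\cU\subset\cN'$ contains a cylinder set of the form $\{\omega:(\lab\omega,\xi_1\rab,\dotsc,\lab\omega,\xi_d\rab)\in C\}$ with linearly independent $\xi_1,\dotsc,\xi_d\in\cN$ and $C\subset\R^d$ of positive Lebesgue measure, and then to read off positivity from the explicit nondegenerate $d$-dimensional Gaussian density computed in the preceding proof.
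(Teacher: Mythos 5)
Your proof is correct and follows the same route as the paper: both use the mixing representation $\mu_\beta=\int_0^\infty\mu^{(s)}\,\rmd\nu_\beta(s)$ from the preceding theorem together with the full-support property of the Gaussian measures $\mu^{(s)}$ on $\cN'$. The paper simply cites the latter as well known, whereas you additionally sketch how to justify it (via the Cameron--Martin support theorem or directly through nondegenerate cylinder sets), which is a harmless elaboration rather than a different argument.
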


\begin{proof}
Let $U\subset\cN'$ be open and non-empty. It is well-known that for the Gaussian measures $\mu^{(s)}$ it holds $\mu^{(s)}(U)>0$ for each $s\in(0,\infty)$, hence
\[ \mu_\beta(U)=\int_0^\infty\mu^{(s)}(U)\,\rmd\nu_\beta(s) \]
is strictly positive.
\end{proof}

\subsection{Appell polynomials}
\noindent We want to give a representation for polynomials $\varphi\in\cP(\cN')$ in terms of Appell polynomials. As in \cite{KSWY98} we first introduce the $\mu_\beta$-exponential by
\[
	e_{\mu_\beta}(\varphi;z) = \frac{\e^{\lab z,\varphi\rab}}{l_{\mu_\beta}(\varphi)}, \quad  z\in\cN'_\C, \varphi\in\cN_\C.
\]
This expression is well defined if and only if $l_{\mu_\beta}(\varphi)\neq 0$. Note that $l_{\mu_\beta}(0)=1$ and $l_{\mu_\beta}$ is holomorphic. This implies that there exists a neighborhood $\cU_0\subset\cN_\C$ of zero, such that $l_{\mu_\beta}(\varphi)\neq 0$ for all $\varphi\in\cU_0$.

For $\varphi\in\cU_0$ the ${\mu_\beta}$-exponential can be expanded in a power series, i.e.
\[
	e_{\mu_\beta}(\varphi;z)=\sum_{n=0}^\infty \frac{1}{n!} \lab P^{\mu_\beta}_n(z),\varphi^{(n)} \rab,\quad  z\in\cN'_\C,\varphi\in\cU_0,
\] 
for suitable $P_n^{\mu_\beta}(z)\in\lb\cN^{\hat{\otimes}n}_\C\rb'$, see \cite{KSWY98}. By
\[
	\PP^{\mu_\beta} = \lcb \lab P^{\mu_\beta}_n(\cdot),\varphi^{(n)} \rab : \varphi^{(n)}\in\cN^{\hat{\otimes}n}_\C,\;n\in\N\rcb
\]
we denote the so-called $\PP^{\mu_\beta}$-system. It is shown in \cite{KSWY98} that $\PP^{\mu_\beta}$ coincides with $\cP(\cN')$.

\begin{remark}
The $\mu_\beta$-exponential is given by \[e_{\mu_\beta}(\varphi;x) = \frac{\e^{\lab x,\varphi\rab}}{\rmE_\beta\lb\halb\lab\varphi,\varphi\rab\rb} \] and the first Appell polynomials can be calculated to be
\begin{align*}
\lab P^{\mu_\beta}_0 (z),\varphi^{\otimes 0}\rab &= 1, \\
\lab P^{\mu_\beta}_1 (z),\varphi^{\otimes 1}\rab &= \lab z,\varphi\rab, \\ 
\lab P^{\mu_\beta}_2 (z),\varphi^{\otimes 2}\rab &= \lab z^{\otimes 2},\varphi^{\otimes 2}\rab - \frac{1}{\Gamma(\beta+1)}\lab\varphi,\varphi\rab, \\
\lab P^{\mu_\beta}_3 (z),\varphi^{\otimes 3}\rab &= \lab z^{\otimes 3},\varphi^{\otimes 3}\rab - \frac{3}{\Gamma(\beta+1)}\lab\varphi,\varphi\rab\lab z,\varphi\rab.
\end{align*} 
Compared to the orthogonal polynomials in Lemma \ref{Lemma:uniqueness}, we see that 
\[
	\lab P^{\mu_\beta}_3(\cdot) , \varphi^{\otimes 3} \rab \neq I_3(\varphi) = \lab\cdot,\varphi\rab^3 - \frac{6\Gamma(\beta+1)}{\Gamma(2\beta+1)}\lab\varphi,\varphi\rab\lab\cdot,\varphi\rab.
\]
Consequently, due to uniqueness of the orthogonal system, the $\PP^{\mu_\beta}$-system is not orthogonal if $\beta\neq 1$. In the case $\beta=1$ the Appell polynomials and the polynomials in Lemma \ref{Lemma:uniqueness} coincide and they are equal to the Hermite polynomials.
\end{remark}

\subsection{The biorthogonal system}
\noindent The space of smooth polynomials $\cP(\cN')$ shall be equipped with the natural topology, such that the mapping
\[
	\varphi = \sum_{n=0}^\infty \lab \cdot^\otn,\varphi^{(n)}\rab \leftrightarrow \vec{\varphi}=\lcb \varphi^{(n)} : n\in\N \rcb
\]
becomes a topological isomorphism from $\cP(\cN')$ to the topological direct sum of tensor powers $\cN^\wotn_\C$, i.e.
\[
	\cP(\cN') \simeq \bigoplus_{n=0}^\infty \cN^\wotn_\C
\]
(note that $\varphi^{(n)}\neq 0$ only for finitely many $n\in\N$). Then we introduce the space $\cP'_{\mu_\beta}(\cN')$ as the dual space of $\cP(\cN')$ with respect to $L^2(\mu_\beta)$, i.e.
\[
	\cP(\cN') \subset L^2(\mu_\beta) \subset \cP'_{\mu_\beta}(\cN')
\]
and the dual pairing $\ddp{\cdot}{\cdot}_{\mu_\beta}$ between $\cP'_{\mu_\beta}(\cN')$ and $\cP(\cN')$ is an extension of the scalar product on $L^2({\mu_\beta})$ by
\[
	\ddp{f}{\varphi}_{\mu_\beta} = (f,\overline{\varphi})_{L^2({\mu_\beta})},\quad\varphi\in\cP(\cN'),f\in L^2({\mu_\beta}).
\] 
Note that (A1) ensures that $\cP(\cN')\subset L^2(\mu_\beta)$ is dense, see \cite{Sko74}.

The aim is now to describe the distributions in $\cP'_{\mu_\beta}(\cN')$ in a similar way as the smooth polynomials, i.e.~we find elements $\Phi^{(n)}\in\lb\cN^{\hat{\otimes}n}_\C\rb'$ and an operator $Q^{\mu_\beta}_n$ on $\lb\cN^{\hat{\otimes}n}_\C\rb'$, such that
\[
	\Phi = \sum_{n=0}^\infty Q^{\mu_\beta}_n\lb\Phi^{(n)}\rb
\]
and moreover a certain biorthogonality relation should hold, see Theorem \ref{biorth} below. To find this required $Q^{\mu_\beta}_n$, we proceed as in \cite{KSWY98} and define a differential operator on $\cP(\cN')$ depending on $\Phi^{(n)}\in\lb\cN^{\hat{\otimes}n}_\C\rb'$ by
\[
	D\lb\Phi^{(n)}\rb\lab\omega^{\otimes m},\varphi^{(m)}\rab := \begin{cases} \frac{m!}{(m-n)!}\lab\omega^{\otimes (m-n)}\hat{\otimes}\Phi^{(n)},\varphi^{(m)}\rab, & m\geq n \\ 0, & m<n \end{cases}
\]
for a monomial $\omega\mapsto\lab\omega^{\otimes m},\varphi^{(m)}\rab$ with $\varphi^{(m)}\in\cN^{\hat{\otimes}m}_\C$. 
If $\cN$ is the space of Schwartz test functions $\cS(\R)$ and $\cH=L^2(\R,dx)$, then for $n=1$ and $\Phi^{(1)}=\delta_t\in\cN_\C'$ this differential operator coincides with the Hida derivative, see \cite{HKPS}.
For each $\Phi^{(n)}\in\lb\cN^{\hat{\otimes}n}_\C\rb'$ the operator $D(\Phi^{(n)})$ is continuous from $\cP(\cN')$ to $\cP(\cN')$, see \cite{KSWY98}, and this enables us to define the dual operator $D(\Phi^{(n)})^* \colon \cP'_{\mu_\beta}(\cN')\to\cP'_{\mu_\beta}(\cN')$. We set $Q^{\mu_\beta}_n(\Phi^{(n)})=D(\Phi^{(n)})^*\1$ for $\Phi^{(n)}\in\lb\cN^{\hat{\otimes}n}_\C\rb'$ and denote the so-called $\Q^{\mu_\beta}$-system in $\cP'_{\mu_\beta}(\cN')$ by
\[
	\Q^{\mu_\beta} = \lcb Q^{\mu_\beta}_n(\Phi^{(n)}) : \Phi^{(n)}\in\lb\cN^{\hat{\otimes}n}_\C\rb',n\in\N\rcb.
\]
As in \cite{KSWY98} we have that for each $\Phi\in\cP'_{\mu_\beta}(\cN')$ there is a unique sequence of kernels $\lb\Phi^{(n)}\rb_{n\in\N} \subset \lb\cN^{\hat{\otimes}n}_\C\rb'$, such that
\begin{equation}\label{eq:chaosdecomposition}
	\Phi=\sum_{n=0}^\infty Q^{\mu_\beta}_n(\Phi^{(n)}).
\end{equation}
Vice versa, every such sum is a generalized function in $\cP'_{\mu_\beta}(\cN')$. The pair $\lb \PP^{\mu_\beta},\Q^{\mu_\beta} \rb$ is called Appell system $\A^{\mu_\beta}$ generated by the measure ${\mu_\beta}$. 

By the next theorem from \cite{KSWY98}, the main achievement of Appell systems becomes apparent, since it proves a biorthogonality relation between $\PP^{\mu_\beta}$ and $\Q^{\mu_\beta}$:

\begin{theorem}\label{biorth}
For $\Phi^{(n)}\in\lb\cN^{\hat{\otimes}n}_\C\rb'$ and $\varphi^{(m)}\in\cN^{\hat{\otimes}m}_\C$ we have
\[
	\ddp{Q^{\mu_\beta}_n\lb\Phi^{(n)}\rb}{\lab P^{\mu_\beta}_m,\varphi^{(m)}\rab}_{\mu_\beta} = \delta_{m,n} n! \lab \Phi^{(n)},\varphi^{(n)}\rab,\quad n,m\in\N_0.
\]
\end{theorem}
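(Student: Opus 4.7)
The plan is to reduce the biorthogonality to a computation involving the generating function of the Appell polynomials, via the defining property $Q^{\mu_\beta}_n(\Phi^{(n)}) = D(\Phi^{(n)})^*\mathbf{1}$. Concretely, the dual pairing rewrites as
\[
	\ddp{Q^{\mu_\beta}_n(\Phi^{(n)})}{\lab P^{\mu_\beta}_m,\varphi^{(m)}\rab}_{\mu_\beta} = \ddp{\mathbf{1}}{D(\Phi^{(n)})\lab P^{\mu_\beta}_m,\varphi^{(m)}\rab}_{\mu_\beta} = \int_{\cN'} D(\Phi^{(n)})\lab P^{\mu_\beta}_m,\varphi^{(m)}\rab\,\rmd\mu_\beta,
\]
so everything boils down to (i) computing how $D(\Phi^{(n)})$ acts on an Appell polynomial and (ii) integrating the result against $\mu_\beta$.

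For step (i) I would work with the generating series $e_{\mu_\beta}(\varphi;z)=\sum_{m\ge 0}\frac{1}{m!}\lab P_m^{\mu_\beta}(z),\varphi^{\otimes m}\rab$, valid for $\varphi$ in the neighborhood $\cU_0\subset\cN_\C$ on which $l_{\mu_\beta}$ does not vanish. Applying $D(\Phi^{(n)})$ termwise to the monomial expansion of $e^{\lab z,\varphi\rab}$, using the symmetry of $\varphi^{\otimes m}$ and the definition of $D(\Phi^{(n)})$, yields
\[
	D(\Phi^{(n)})\,e^{\lab z,\varphi\rab} = \lab\Phi^{(n)},\varphi^{\otimes n}\rab\,e^{\lab z,\varphi\rab}.
\]
Dividing by the nonvanishing factor $l_{\mu_\beta}(\varphi)$ and matching coefficients of $\varphi^{\otimes m}$ on both sides of $D(\Phi^{(n)})e_{\mu_\beta}(\varphi;z)=\lab\Phi^{(n)},\varphi^{\otimes n}\rab\,e_{\mu_\beta}(\varphi;z)$ gives
\[
	D(\Phi^{(n)})\lab P^{\mu_\beta}_m,\varphi^{\otimes m}\rab = \begin{cases} \dfrac{m!}{(m-n)!}\,\lab\Phi^{(n)},\varphi^{\otimes n}\rab\,\lab P^{\mu_\beta}_{m-n},\varphi^{\otimes (m-n)}\rab, & m\ge n,\\[4pt] 0, & m<n. \end{cases}
\]
The restriction to the diagonal tensors $\varphi^{\otimes m}$ is harmless: by the isomorphism $\cP(\cN')\simeq\bigoplus_n\cN^{\wotn}_\C$ and the polarization identity, the formula determines $D(\Phi^{(n)})$ on any symmetric kernel $\varphi^{(m)}\in\cN^{\wotm}_\C$.

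For step (ii), integrating the generating relation $\int_{\cN'}e_{\mu_\beta}(\varphi;z)\,\rmd\mu_\beta(z)=l_{\mu_\beta}(\varphi)/l_{\mu_\beta}(\varphi)=1$ and comparing coefficients of $\varphi^{\otimes k}$ shows $\int_{\cN'}\lab P^{\mu_\beta}_k,\varphi^{\otimes k}\rab\,\rmd\mu_\beta = \delta_{k,0}$. Substituting the formula from step (i) into the dual pairing and using this moment identity with $k=m-n$ collapses the expression to $\delta_{m,n}\,n!\,\lab\Phi^{(n)},\varphi^{\otimes n}\rab$, and polarization lifts this to general $\varphi^{(m)}\in\cN^{\wotm}_\C$.

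The main obstacle is the rigorous justification of the termwise manipulation with the power series in $\varphi$ when applying $D(\Phi^{(n)})$ and when passing the integral through the expansion. This is handled by restricting to $\varphi\in\cU_0$, where the Appell series converges in the topology of $\cP(\cN')$ (the topological direct sum), combined with the continuity of $D(\Phi^{(n)})\colon\cP(\cN')\to\cP(\cN')$ recalled from \cite{KSWY98} and Lemma~\ref{expint} (which provides the uniform integrability needed to exchange integration and summation). Once all such exchanges are justified on a neighborhood of zero in $\cN_\C$, identifying coefficients in the resulting analytic identities of $\varphi$ yields the claim for all monomials and hence, by polarization, for all symmetric kernels.
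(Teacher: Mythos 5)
The paper does not prove this theorem itself but cites \cite{KSWY98}; your argument is a correct reconstruction of the proof given there, resting on the same two ingredients: the lowering identity $D(\Phi^{(n)})\lab P^{\mu_\beta}_m,\varphi^{\otimes m}\rab=\frac{m!}{(m-n)!}\lab\Phi^{(n)},\varphi^{\otimes n}\rab\lab P^{\mu_\beta}_{m-n},\varphi^{\otimes(m-n)}\rab$ and the moment identity $\int_{\cN'}\lab P^{\mu_\beta}_k,\varphi^{\otimes k}\rab\,\rmd\mu_\beta=\delta_{k,0}$. The only point to phrase with care is that $e^{\lab z,\varphi\rab}$ is not itself a smooth polynomial, so $D(\Phi^{(n)})e^{\lab z,\varphi\rab}=\lab\Phi^{(n)},\varphi^{\otimes n}\rab e^{\lab z,\varphi\rab}$ should be read coefficient-wise, as an identity of formal power series in $\varphi$ whose individual coefficients are genuine polynomial identities involving only finitely many terms; with that reading, your coefficient matching and the final combination with the moment identity, followed by polarization, are exactly the argument of \cite{KSWY98}.
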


\subsection{Test functions and distributions}
\noindent With the help of the Appell system $\A^{\mu_\beta}$ a test function and a distribution space can now be constructed, see \cite{KSWY98}. For $\varphi=\sum_{n=0}^N \lab P^{\mu_\beta}_n,\varphi^{(n)}\rab\in\cP(\cN')$ we define for any $p,q\in\N_0$ the norm
\[
	\norm{\varphi}^2_{p,q,{\mu_\beta}} := \sum_{n=0}^N (n!)^2 2^{nq}\abs{\varphi^{(n)}}_p^2.
\]
By $\lb\cH_p\rb^1_{q,{\mu_\beta}}$ we denote the completion of $\cP(\cN')$ with respect to $\norm{\cdot}_{p,q,{\mu_\beta}}$. Under the condition (A2), \cite{KK99} shows that there are $p',q'\in\N$, such that $(\cH_p)^1_{q,{\mu_\beta}}$ can be topologically embedded in $L^2({\mu_\beta})$ for all $p>p',q>q'$. The test function space $\lb\cN\rb^1_{\mu_\beta}$ is defined as the projective limit of $\bigl(\lb\cH_p\rb^1_{q,{\mu_\beta}}\bigr)_{p,q\in\N}$, i.e.
\[
	\lb\cN\rb^1_{\mu_\beta} := \underset{p,q\in\N}{\operatorname{pr\,lim}}\lb\cH_p\rb^1_{q,{\mu_\beta}}.
\]
As in \cite{KSWY98} this is a nuclear space, which is continuously embedded in $L^2({\mu_\beta})$ and it turns out that the test function space $(\cN)^1_\mu$ is the same for all measures $\mu$ satisfying (A1) and (A2), thus we will just use the notation $(\cN)^1$.

\begin{example}\label{ex:exponentials}
It holds that $\norm{e_{\mu_\beta}(\varphi;\cdot)}^2_{p,q,\mu} = \sum_{n=0}^\infty 2^{nq}\abs{\varphi}^{2n}_p$. Hence we get, that the $\mu$-exponential is not in $(\cN)^1$ if $\varphi\neq 0$, but $e_{\mu_\beta}(\varphi;\cdot)\in (\cH_p)^1_{q,\mu_\beta}$ if $\varphi\in U_{p,q}:=\lcb \varphi\in\cN : 2^q\abs{\varphi}_p<1\rcb$. Moreover, the set 
\[
	\lcb e_\mu(\theta;\cdot) : 2^q\abs{\theta}_p<1,\theta\in\cN_\C\rcb
\] 
is total in $(\cH_p)_{q,\mu_\beta}^1$, see \cite{KSWY98}.
\end{example}

Following \cite{KSWY98} we use the representation $\Phi=\sum_{n=0}^\infty Q^{\mu_\beta}_n(\Phi^{(n)}) \in\cP'_{\mu_\beta}(\cN')$ in order to define the norms
\[
	\norm{\Phi}^2_{-p,-q,{\mu_\beta}} := \sum_{n=0}^\infty 2^{-qn}\abs{\Phi}^2_{-p},\quad p,q\in\N_0.
\]
By $(\cH_{-p})^{-1}_{-q,{\mu_\beta}}$ we denote the set of all $\Phi\in\cP'_{\mu_\beta}(\cN')$ for which $\norm{\Phi}_{-p,-q,{\mu_\beta}}$ is finite. It holds that $(\cH_{-p})^{-1}_{-q,{\mu_\beta}}$ is the dual of $(\cH_p)^1_{q,{\mu_\beta}}$, see \cite{KSWY98}. The space of distributions $(\cN)_{\mu_\beta}^{-1}$ is defined as the inductive limit
\[
	(\cN)_{\mu_\beta}^{-1} = \underset{p,q\in\N}{\operatorname{ind\,lim}}(\cH_{-p})^{-1}_{-q,{\mu_\beta}}.
\] 
As in \cite{KSWY98} $(\cN)_{\mu_\beta}^{-1}$ is the dual of $(\cN)^1$ with respect to $L^2({\mu_\beta})$ and the dual pairing between a distribution $\Phi=\sum_{n=0}^\infty\lab Q^{\mu_\beta}_n\lb\Phi^{(n)}\rb$ with a test function $\varphi=\sum_{n=0}^\infty \lab P^{\mu_\beta}_n,\varphi^{(n)}\rab$ is given by Theorem \ref{biorth} as
	\[
		\ddp{\Phi}{\varphi}_{\mu_\beta} = \sum_{n=0}^\infty n! \lab\Phi^{(n)},\varphi^{(n)}\rab.
	\]
We shall use the same notation for the dual pairing between $(\cH_{-p})^{-1}_{-q,{\mu_\beta}}$ and $(\cH_p)^1_{q,{\mu_\beta}}$.

\subsection{Integral transforms and characterization}
\noindent Since $\mu_\beta$ satisfies (A1), there is $p'_{\mu_\beta}\in\N$ and $\varepsilon_{\mu_\beta}>0$ such that $$\int_{\cN'} \e^{\varepsilon_{\mu_\beta}\abs{\omega}_{-p'_{\mu_\beta}}}\,\rmd{\mu_\beta}(\omega) <\infty,$$ see \cite{KSWY98}. Thus, if $\varphi\in\cV_0=\lcb\varphi\in\cN_\C\mid 2\abs{\varphi}_{p'_{\mu_\beta}}\leq\varepsilon_{\mu_\beta}\rcb$ we have that $\e^{\lab\cdot,\varphi\rab}\in L^2({\mu_\beta})$. We define the Laplace transform of $f\in L^2({\mu_\beta})$ by
\[
	L_{\mu_\beta} f(\varphi) = \int_{\cN'} f(\omega)\e^{\lab\omega,\varphi\rab}\,\rmd{\mu_\beta}(\omega),\quad \varphi\in\cV_0.
\]
Moreover we introduce for all $\varphi\in\cU_0\cap\cV_0$ the $S_{\mu_\beta}$-transform of $f\in L^2(\mu_\beta)$ by
\[
	S_{\mu_\beta} f(\varphi) := \frac{L_{\mu_\beta} f(\varphi)}{l_{\mu_\beta}(\varphi)} = \int_{\cN'} f(\omega) e_{\mu_\beta}(\varphi;\omega)\,\rmd{\mu_\beta}(\omega).
\]
We would like to extend the definition of the $S_{\mu_\beta}$-transform to $(\cN)^{-1}_{\mu_\beta}$. Note that for $\Phi\in (\cN)_{\mu_\beta}^{-1}$ there are $p,q\in\N$, such that $\Phi\in (\cH_{-p})^{-1}_{-q,{\mu_\beta}}$. Moreover for $\varphi\in U_{p,q}$ as in Example \ref{ex:exponentials} we have that $e_{\mu_\beta}(\varphi,\cdot)\in (\cH_p)^1_{q,{\mu_\beta}}$. Thus we can define:
\begin{equation}\label{Def:S-transform}
	S_{\mu_\beta}\Phi(\varphi) = \ddp{\Phi}{e_{\mu_\beta}(\varphi,\cdot)}_{\mu_\beta}.
\end{equation}
Using the representation of $\Phi$ as in \eqref{eq:chaosdecomposition} we have
\begin{equation}\label{stransform}
	S_{\mu_\beta}\Phi(\varphi) = \sum_{n=0}^\infty \lab\Phi^{(n)},\varphi^{\otimes n}\rab,\quad\varphi\in U_{p,q}.
\end{equation}

The space $(\cN)^{-1}_{\mu_\beta}$ can be characterized via the $S_{\mu_\beta}$-transform using spaces of holomorphic functions on $\cN_\C$. By $\Hol$ we denote the space of all holomorphic functions at zero. Let $F$ and $G$ be holomorphic on a neighborhood $\cV,\cU\subset\cN_\C$ of zero, respectively. We identify $F$ and $G$ if there is a neighborhood $\cW\subset\cV$ and $\cW\subset\cU$ such that $F(\xi) = G(\xi)$ for all $\xi\in\cW$. $\Hol$ is the union of the spaces
\[
	\lcb F\in\Hol \biggm| n_{p,l,\infty}(F) = \sup_{\abs{\theta}_p\leq2^{-l}} \abs{F(\theta) } <\infty\rcb,\quad p,l\in\N 
\]
and carries the inductive limit topology.

The following theorem is proved in \cite{KSWY98}:
\begin{theorem}\label{characterization}
The $S_{\mu_\beta}$-transform is a topological isomorphism from $(\cN)^{-1}_{\mu_\beta}$ to $\Hol$. Moreover, if $F\in\Hol,\;F(\theta) = \sum_{n=0}^\infty \lab\Phi^{(n)},\theta^\otn\rab$ for all $\theta\in\cN_\C$ with $\abs{\theta}_p\leq 2^{-l}$ and if $p'>p$ with $\norm{i_{p',p}}_{HS}<\infty$ and $q\in\N$ such that $\rho := 2^{2l-q}\e^2\norm{i_{p',p}}_{HS}^2<1$ then $\Phi = \sum_{n=0}^\infty Q^{\mu_\beta}_n(\Phi^{(n)})\in\lb\cH_{-p'}\rb^{-1}_{-q}$ and
\[
	\norm{\Phi}_{-p',q,{\mu_\beta}} \leq n_{p,l,\infty}(F) (1-\rho)^{-1/2}.
\]
\end{theorem}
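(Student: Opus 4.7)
The plan is to verify the two directions of the topological isomorphism separately, reading the norm bound off the surjective direction. For the forward direction, take $\Phi = \sum_{n=0}^\infty Q^{\mu_\beta}_n(\Phi^{(n)}) \in (\cH_{-p})^{-1}_{-q,{\mu_\beta}}$. Combining the expansion \eqref{stransform} with Cauchy--Schwarz gives
\[
 |S_{\mu_\beta}\Phi(\varphi)| \leq \sum_{n=0}^\infty |\Phi^{(n)}|_{-p} |\varphi|_p^n \leq \norm{\Phi}_{-p,-q,\mu_\beta}\Bigl(\sum_{n=0}^\infty 2^{qn}|\varphi|_p^{2n}\Bigr)^{1/2},
\]
which converges on $\{\varphi\in\cN_\C : 2^{q/2}|\varphi|_p<1\}$ and defines a holomorphic function there, showing $S_{\mu_\beta}\Phi\in\Hol$ continuously. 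Injectivity is immediate: if $S_{\mu_\beta}\Phi\equiv 0$ in a neighbourhood of zero, then by uniqueness of the Taylor expansion all kernels $\Phi^{(n)}$ vanish on $\cN^{\wotn}_\C$ and hence $\Phi=0$ by the decomposition in \eqref{eq:chaosdecomposition}.

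For surjectivity with the sharp norm bound, let $F\in\Hol$ be bounded by $M:=n_{p,l,\infty}(F)$ on $\{|\theta|_p\leq 2^{-l}\}$ and write its Taylor series as $F(\theta)=\sum_{n=0}^\infty \lab\Phi^{(n)},\theta^\otn\rab$ with $\Phi^{(n)}\in(\cN_\C^{\wotn})'$. Applying the one-dimensional Cauchy estimate to $z\mapsto F(z\theta)$ on the disc $|z|\leq 2^{-l}|\theta|_p^{-1}$ yields
\[
 |\lab\Phi^{(n)},\theta^{\otn}\rab| \leq M\cdot 2^{ln}|\theta|_p^n, \qquad \theta\in\cN_\C.
\]
The polarization identity for symmetric multilinear forms on a complex Hilbert space, together with the Martin--H\"ormander bound $n^n/n!\leq \e^n$, upgrades this radial estimate to the multilinear estimate $|\Phi^{(n)}(\theta_1,\dots,\theta_n)|\leq M(2^l\e)^n|\theta_1|_p\cdots|\theta_n|_p$, so $\Phi^{(n)}$ extends to a continuous form on $\cH_{p,\C}^{\wotn}$ of dual norm at most $M(2^l\e)^n$.

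To land inside $\cH_{-p'}^{\wotn}$ rather than only in $(\cH_p^{\wotn})'$, use that the embedding $i_{p',p}\colon \cH_{p'}\hookrightarrow \cH_p$ is Hilbert--Schmidt; its $n$-fold symmetric tensor power is Hilbert--Schmidt from $\cH_{p',\C}^{\wotn}$ to $\cH_{p,\C}^{\wotn}$ with norm $\|i_{p',p}\|_{HS}^n$, and dualising gives
\[
 |\Phi^{(n)}|_{-p'} \leq \|i_{p',p}\|_{HS}^n \cdot M(2^l\e)^n = M\bigl(2^l\e\|i_{p',p}\|_{HS}\bigr)^n.
\]
Summing with weights $2^{-qn}$ and using $\rho=2^{2l-q}\e^2\|i_{p',p}\|_{HS}^2<1$,
\[
 \norm{\Phi}_{-p',-q,\mu_\beta}^2 = \sum_{n=0}^\infty 2^{-qn}|\Phi^{(n)}|_{-p'}^2 \leq M^2 \sum_{n=0}^\infty \rho^n = \frac{M^2}{1-\rho}.
\]
Thus $\Phi:=\sum_{n=0}^\infty Q^{\mu_\beta}_n(\Phi^{(n)})\in (\cH_{-p'})^{-1}_{-q,\mu_\beta}$ is well defined, satisfies the claimed bound, and by \eqref{stransform} obeys $S_{\mu_\beta}\Phi=F$.

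The main obstacle is the polarization step: one must verify that the factor $n^n/n!\leq \e^n$ lost when passing from the radial Cauchy estimate to the full multilinear estimate exactly accounts for the $\e^2$ in the definition of $\rho$, and that this bound survives the passage through the symmetric tensor power of the Hilbert--Schmidt embedding. The remaining work, namely the holomorphy of $S_{\mu_\beta}\Phi$, injectivity, and the topological nature of the resulting isomorphism, follows by Cauchy--Schwarz and standard inductive/projective limit arguments as carried out in \cite{KSWY98}.
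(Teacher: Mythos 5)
The paper does not prove this theorem itself: it is quoted verbatim from \cite{KSWY98}, so there is no in-paper argument to compare against. Your sketch is a correct reconstruction of the standard proof from that reference -- Cauchy's estimate on $z\mapsto F(z\theta)$, polarization with the loss $n^n/n!\le \e^n$, and the Hilbert--Schmidt tensor-power embedding via $\|\Phi^{(n)}\circ i_{p',p}^{\otimes n}\|_{HS}\le \|\Phi^{(n)}\|_{\mathrm{op}}\|i_{p',p}\|_{HS}^n$ -- and the constants you track do reproduce exactly the stated $\rho$ and the bound $n_{p,l,\infty}(F)(1-\rho)^{-1/2}$.
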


As a corollary from the characterization theorem we now present a result which characterizes the integrable mappings with values in $(\cN)^{-1}_{\mu_\beta}$ in a weak sense: 

\begin{theorem}\label{charint}
Let $(T,\cB,\nu)$ be a measure space and $\Phi_t\in (\cN)^{-1}_{\mu_\beta}$ for all $t\in T$. Let $\cU\subset\cN_\C$ be an appropriate neighborhood of zero and $C<\infty$, such that:
\begin{enumerate}[label=\textit{(\roman*)}]
	\item $S_{\mu_\beta}\Phi_\cdot(\xi)\colon T\to\C$ is measurable for all $\xi\in\cU$.
	\item $\int_T \abs{S_{\mu_\beta}\Phi_t(\xi)}\,\rmd\nu(t) \leq C$ for all $\xi\in\cU$.
\end{enumerate}
Then there exists $\Psi\in (\cN)^{-1}_{\mu_\beta}$ such that for all $\xi\in\cU$
\[
	S_{\mu_\beta}\Psi(\xi) = \int_T S_{\mu_\beta}\Phi_t(\xi)\,\rmd\nu(t).
\]
We denote $\Psi$ by $\int_T \Phi_t\,\rmd\nu(t)$ and call it the weak integral of $\Phi$.
\end{theorem}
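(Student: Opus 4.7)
The plan is to construct $\Psi$ by prescribing its $S_{\mu_\beta}$-transform and then invoking the characterization Theorem \ref{characterization}. Concretely, set
$F(\xi) := \int_T S_{\mu_\beta}\Phi_t(\xi)\,\rmd\nu(t)$
for $\xi\in\cU$. If one shows $F\in\Hol$, then Theorem \ref{characterization} delivers a unique $\Psi := S_{\mu_\beta}^{-1}(F)\in (\cN)^{-1}_{\mu_\beta}$ with $S_{\mu_\beta}\Psi=F$ on $\cU$, which is the desired weak integral.

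To place $F$ in $\Hol$, I would first shrink $\cU$ if necessary so it contains a basic neighborhood $\{\xi\in\cN_\C : \abs{\xi}_p\le 2^{-l}\}$ for suitable $p,l\in\N$. Assumption (ii) then immediately yields $\abs{F(\xi)}\le C$ on this neighborhood, so $n_{p,l,\infty}(F)\le C<\infty$. It remains to verify that $F$ is holomorphic in a neighborhood of zero.

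For holomorphy I would mimic the argument already used for $l_{\mu_\beta}$: establish G-holomorphy together with local boundedness, then appeal to Dineen's theorem. Local boundedness is in hand. For G-holomorphy, fix $\varphi_0,\varphi\in\cN_\C$ and consider $f(z):=F(\varphi_0+z\varphi)$ on a disk $D\subset\C$ around zero on which $\varphi_0+z\varphi\in\cU$. Continuity of $f$ follows from dominated convergence using (ii) together with the continuity of each map $z\mapsto S_{\mu_\beta}\Phi_t(\varphi_0+z\varphi)$. For a closed bounded curve $\gamma\subset D$, Fubini then gives
\begin{align*}
\int_\gamma f(z)\,\rmd z = \int_T\int_\gamma S_{\mu_\beta}\Phi_t(\varphi_0+z\varphi)\,\rmd z\,\rmd\nu(t) = 0,
\end{align*}
because each inner integrand $z\mapsto S_{\mu_\beta}\Phi_t(\varphi_0+z\varphi)$ is holomorphic (since $S_{\mu_\beta}\Phi_t\in\Hol$), so its contour integral vanishes. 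Morera's theorem yields holomorphy of $f$, hence G-holomorphy of $F$, and Dineen's theorem upgrades this to holomorphy. With $F\in\Hol$ in hand, Theorem \ref{characterization} completes the argument.

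The main obstacle is justifying the application of Fubini, which requires joint measurability of $(t,z)\mapsto S_{\mu_\beta}\Phi_t(\varphi_0+z\varphi)$ on $T\times\gamma$. Measurability in $t$ is given by (i), while continuity in $z$ from the holomorphy of $S_{\mu_\beta}\Phi_t$ makes this map a Carathéodory function, hence jointly measurable. The integrability requirement is then secured by the uniform bound $\int_\gamma\int_T\abs{S_{\mu_\beta}\Phi_t(\varphi_0+z\varphi)}\,\rmd\nu(t)\,\abs{\rmd z}\le C\cdot\mathrm{length}(\gamma)<\infty$ coming from (ii).
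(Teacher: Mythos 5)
Your proposal is correct and follows essentially the same route as the paper: define $F(\xi)=\int_T S_{\mu_\beta}\Phi_t(\xi)\,\rmd\nu(t)$, obtain local boundedness from (ii), establish G-holomorphy via Morera's theorem combined with Fubini, upgrade to holomorphy by Dineen's theorem, and invoke Theorem \ref{characterization} to produce $\Psi$. Your explicit justification of joint measurability for Fubini (the Carath\'eodory-function observation) is a small welcome addition that the paper leaves implicit.
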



\begin{remark}
If in addition the mapping
\[
	(\cH_{p'})_{q',\mu_\beta}^1 \supset\spann\lcb e_{\mu_\beta}(\xi,\cdot) \mid \xi\in U_{p',q'} \rcb \ni \varphi \mapsto \ddp{\Phi_\cdot}{\varphi}_{\mu_\beta} \in L^1(T,\nu)
\]
is continuous for large enough $p',q'\in\N$, then $\Psi$ coincides with the Pettis integral.
\end{remark}

\begin{proof}
First we show that the mapping 
\[
	\cU\ni\xi\mapsto F(\xi):=\int_T S_{\mu_\beta}\Phi_t(\xi)\,\rmd\nu(t)
\]
is holomorphic on $\cU$. Let $\xi_0\in\cN_\C$ and $ U\subset\C$ open and small enough, such that $z\xi+\xi_0\in\cU$ for all $\xi\in\cU$ and for all $z\in\overline{U}$. Furthermore let $(z_n)_{n\in\N}$ be a sequence in $U$, $\lim_{n\to\infty} z_n=z\in\overline{U}$. Since $S_{\mu_\beta}\Phi_t$ is holomorphic on $\cU$ we find $z^*\in\overline{U}$ such that for all $n\in\N$
\[
	\abs{S_{\mu_\beta}\Phi_t(\xi_0+z_n\xi)} \leq \abs{S_{\mu_\beta}\Phi_t(\xi_0+z^*\xi)}.
\]
Hence we have by dominated convergence that $U\ni z\mapsto F(\xi_0+z\xi)$ is continuous. Choose now any closed and bounded curve $\gamma$ in $U$. Since $\gamma$ is compact, we have that $\int_\gamma \int_T \abs{S_{\mu_\beta}\Phi_t(\xi_0+z\xi)}\,\rmd\nu(t)\,\rmd z<\infty$ and by Fubini we obtain
\[
	\int_\gamma F(\xi_0+z\xi)\,\rmd z = \int_T \int_\gamma S_{\mu_\beta}\Phi_t(\xi_0+z\xi)\,\rmd z\,\rmd\nu(t)=0,
\]
since the $S_{\mu_\beta}$-transform is an holomorphic function. This shows that $F$ is G-holomorphic. Note that $\abs{F(\xi)} \leq \int_T \abs{S_{\mu_\beta}\Phi_t(\xi)}\,\rmd\nu(t) \leq C$, hence $F$ is also locally bounded and thus holomorphic on $\cU$, see \cite{Di81}. By Theorem \ref{characterization} there exists $\Psi\in (\cN)^{-1}_{\mu_\beta}$ such that
\[
	S_{\mu_\beta}\Psi(\xi) = \int_T S_{\mu_\beta}\Phi_t(\xi)\,\rmd\nu(t)
\]
for all $\xi\in\cU$.

\end{proof}

\section{Donsker's Delta in Mittag-Leffler analysis}
\noindent In Gaussian analysis, Donsker's delta is an important example for a Hida distribution with many applications in quantum field theory, in the theory of stochastic differential equations and in mathematical finance, see \cite{HKPS, W95, AOU01} and references therein. In this section we introduce a distribution in $(\cN)^{-1}_{\mu_\beta}$ being the analog of Donsker's delta in Gaussian analysis. The strategy is as follows:

Use the integral representation for the Dirac delta distribution $\delta$ and give sense to the expression
\[
	\delta(\lab\cdot,\eta\rab) = \frac{1}{2\pi} \int_\R \e^{ix\lab\cdot,\eta\rab}\,\rmd x,\quad\eta\in\cH,
\]
by using Theorem \ref{charint}. We first remark that $\exp(ix\lab\cdot,\eta\rab)\in L^2(\mu_\beta)$ since the absolute value is $\abs{\exp(ix\lab\omega,\eta\rab)}^2=1$. To proceed further, we introduce (as in Gaussian analysis) the following $T_{\mu_\beta}$-transform:

\begin{lemma}
Let $\Phi\in\lb\cN\rb^{-1}_{\mu_\beta}$ and $p,q\in\N$ such that $\Phi\in(\cH_{-p})_{-q,\mu_\beta}^{-1}$.
Then the $T_{\mu_\beta}$-transform given by
\[
	T_{\mu_\beta}\Phi(\varphi) = \ddp{\Phi}{\exp(i\lab\cdot,\varphi\rab}_{\mu_\beta}
\]
is well-defined for $\varphi\in U_{p,q}$ as in Example \ref{ex:exponentials} and we have:
\[
	T_{\mu_\beta}\Phi(\varphi) = \rmE_\beta(-1/2 \lab\varphi,\varphi\rab) S_{\mu_\beta} \Phi(i\varphi).
\]
In particular, $T_{\mu_\beta}\Phi\in\Hol$ if and only if $S_{\mu_\beta}\Phi\in\Hol$ and Theorem \ref{characterization} and Theorem \ref{charint} also hold if the $S_{\mu_\beta}$-transform is replaced by the $T_{\mu_\beta}$-transform.
\end{lemma}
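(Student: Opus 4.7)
The plan is to reduce everything to the already–established properties of the $S_{\mu_\beta}$-transform via the factorization
\[
	\exp(i\lab\cdot,\varphi\rab)=\rmE_\beta\lb-\halb\lab\varphi,\varphi\rab\rb\cdot e_{\mu_\beta}(i\varphi;\cdot),
\]
which is just a rewriting of the definition $e_{\mu_\beta}(i\varphi;\cdot)=\e^{i\lab\cdot,\varphi\rab}/l_{\mu_\beta}(i\varphi)$ once we identify $l_{\mu_\beta}(i\varphi)=\rmE_\beta(-\halb\lab\varphi,\varphi\rab)$ by the corollary to the holomorphy theorem for $l_{\mu_\beta}$.

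First I would check well-definedness. For $\varphi\in U_{p,q}=\{\varphi\in\cN\mid 2^q\abs{\varphi}_p<1\}$ one has $2^q\abs{i\varphi}_p<1$, so Example \ref{ex:exponentials} gives $e_{\mu_\beta}(i\varphi;\cdot)\in(\cH_p)^1_{q,\mu_\beta}$. Multiplying by the scalar $\rmE_\beta(-\halb\lab\varphi,\varphi\rab)$ keeps the function in this space, so $\exp(i\lab\cdot,\varphi\rab)$ is a legitimate test function and the pairing against $\Phi\in(\cH_{-p})^{-1}_{-q,\mu_\beta}$ is well-defined. Pulling the scalar out of the (complex-bilinear) dual pairing and using the definition of the $S_{\mu_\beta}$-transform then immediately yields
\[
	T_{\mu_\beta}\Phi(\varphi)=\rmE_\beta\lb-\halb\lab\varphi,\varphi\rab\rb S_{\mu_\beta}\Phi(i\varphi).
\]

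For the equivalence $T_{\mu_\beta}\Phi\in\Hol\iff S_{\mu_\beta}\Phi\in\Hol$, I would observe that $\rmE_\beta$ is entire with $\rmE_\beta(0)=1$, so the factor $\rmE_\beta(-\halb\lab\varphi,\varphi\rab)$ and its reciprocal are both holomorphic in a neighborhood of zero in $\cN_\C$, while $\varphi\mapsto i\varphi$ is an entire linear automorphism. Thus the two transforms differ by a nowhere-zero holomorphic factor and a holomorphic change of variables, which gives the equivalence. The same observation lets one transfer Theorem \ref{characterization} and Theorem \ref{charint} verbatim: given an $F\in\Hol$, apply the theorem to $\xi\mapsto F(-i\xi)/\rmE_\beta(-\halb\lab-i\xi,-i\xi\rab)=F(-i\xi)/\rmE_\beta(\halb\lab\xi,\xi\rab)$, which is again in $\Hol$, and vice versa. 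I do not expect any genuine obstacle; the only technical point is bookkeeping on the neighborhood on which the reciprocal of $\rmE_\beta(-\halb\lab\varphi,\varphi\rab)$ stays bounded, which is needed to propagate the quantitative norm estimate of Theorem \ref{characterization} and the uniform bound (ii) of Theorem \ref{charint} between the $S_{\mu_\beta}$- and the $T_{\mu_\beta}$-side.
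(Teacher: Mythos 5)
Your proposal is correct and follows essentially the same route as the paper: both rest on the identity $\exp(i\lab\cdot,\varphi\rab)=l_{\mu_\beta}(i\varphi)\,e_{\mu_\beta}(i\varphi;\cdot)$ with $l_{\mu_\beta}(i\varphi)=\rmE_\beta(-\halb\lab\varphi,\varphi\rab)$, pull the scalar out of the dual pairing, and then note that the two transforms differ by a factor lying in $\Hol$ (the paper phrases this as ``$\Hol$ is an algebra''). Your additional remarks on the substitution $\varphi\mapsto i\varphi$ and on where the reciprocal factor is controlled only make explicit what the paper leaves implicit.
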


\begin{proof}
For $\Phi\in(\cH_{-p})_{-q,\mu_\beta}^{-1}$ and $\varphi\in U_{p,q}$ the $S_{\mu_\beta}$-transform as in \eqref{Def:S-transform} is well-defined and we have using Lemma \ref{expint}:
\begin{align*}
	S_{\mu_\beta}\Phi(i\varphi) = \ddp{\Phi}{e_{\mu_\beta}(i\varphi,\cdot)}_{\mu_\beta} = \frac{1}{\rmE_\beta (-1/2\lab\varphi,\varphi\rab)}\ddp{\Phi}{\exp(i\lab\cdot,\varphi\rab}_{\mu_\beta}.
\end{align*}
Since the $S_{\mu_\beta}-$ and the $T_{\mu_\beta}$-transform differ only by a factor from $\Hol$ and since $\Hol$ is an algebra, the second assertion follows immediately.
\end{proof}

Now we calculate the $T_{\mu_\beta}$-transform of the square-integrable integrand $\exp(ix\lab\cdot,\eta\rab)$ for $\varphi\in\cN_\C$ with the help of Lemma \ref{expint}:
\begin{align}
	T_{\mu_\beta}\exp(ix\lab\cdot,\eta\rab)(\varphi) &= \frac{1}{2\pi}\int_{\cN'} \exp(i\lab\omega,x\eta+\varphi\rab)\,\rmd\mu_\beta(\omega) \nonumber \\
	&= \frac{1}{2\pi} \rmE_\beta\Bigl(-\frac{1}{2}x^2\lab\eta,\eta\rab -\frac{1}{2} \lab\varphi,\varphi\rab - x\lab\varphi,\eta\rab \Bigr). \label{eq:Ttransform}
\end{align}
In a next step we show that $\int_\R \abs{T_{\mu_\beta}\exp(ix\lab\cdot,\eta\rab)(\varphi)}\,\rmd x$ is bounded for all $\varphi$ in a neighborhood of zero of $\cN_\C$.

\begin{proposition}\label{Prop:Integral}
For $\eta\in\cH$, $\varphi\in\cN_\C$ and $x\in\R$ let $z(x,\eta,\varphi)=\tfrac{1}{2} x^2\lab\eta,\eta\rab + \tfrac{1}{2}\lab\varphi,\varphi\rab + x\lab\eta,\varphi\rab$. Then there exists a constant $C<\infty$ such that
\[
	\int_\R \abs{\rmE_\beta(-z(x,\eta,\varphi))}\,\rmd x \leq C,\quad\varphi\in\cU := \lcb \varphi\in\cN_\C \mid \abs{\varphi} < M \rcb
\]
for any $0<M<\infty$.
\end{proposition}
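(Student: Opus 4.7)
The strategy is to extend the mixture representation of Lemma~\ref{mixingmeasure} to complex arguments with non-negative real part and to combine this with the algebraic decay of the Mittag-Leffler function at infinity; the case $\eta=0$ must be tacitly excluded since then $z$ is constant in $x$ and the integral fails to converge, so I would work with $\abs{\eta}>0$ throughout.

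First I would decompose $\varphi=\xi_1+\ii\xi_2$ with $\xi_1,\xi_2\in\cN$ and $\abs{\xi_j}\leq\abs{\varphi}<M$, and isolate
\[
\mathrm{Re}\, z(x,\eta,\varphi)=\tfrac{1}{2}\abs{\eta}^2 x^2 + x\lab\eta,\xi_1\rab + \tfrac{1}{2}\bigl(\abs{\xi_1}^2-\abs{\xi_2}^2\bigr).
\]
Completing the square and using $\abs{\lab\eta,\xi_1\rab}\leq\abs{\eta}M$, $\abs{\xi_j}<M$ produces a radius $R_0=R_0(\abs{\eta},M)$ such that $\mathrm{Re}\, z(x,\eta,\varphi)\geq \tfrac{1}{4}\abs{\eta}^2 x^2$ for every $\abs{x}\geq R_0$ and every $\varphi\in\cU$.

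Second I would extend Lemma~\ref{mixingmeasure} by analytic continuation: since $\nu_\beta$ is a probability measure, $w\mapsto\int_0^\infty\e^{-ws}\,\rmd\nu_\beta(s)$ is holomorphic on $\{\mathrm{Re}\, w>0\}$ and continuous on $\{\mathrm{Re}\, w\geq 0\}$, and it coincides with the entire function $\rmE_\beta(-w)$ on $[0,\infty)$. The identity theorem then yields
\[
\abs{\rmE_\beta(-w)}\leq\int_0^\infty\e^{-\mathrm{Re}(w)s}\,\rmd\nu_\beta(s)=\rmE_\beta(-\mathrm{Re}\, w),\quad\mathrm{Re}\, w\geq 0.
\]
Combining this with the classical asymptotic $\rmE_\beta(-r)\sim(\Gamma(1-\beta)r)^{-1}$ as $r\to\infty$ and $0\leq\rmE_\beta(-r)\leq 1$ for $r\geq 0$ (by complete monotonicity), I would pick a constant $K_\beta$ with $\rmE_\beta(-r)\leq K_\beta/(1+r)$ for all $r\geq 0$.

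Third I would split $\int_\R=\int_{\abs{x}\leq R_0}+\int_{\abs{x}>R_0}$. On the tail the previous bounds combine to
\[
\abs{\rmE_\beta(-z(x,\eta,\varphi))}\leq\frac{K_\beta}{1+\tfrac{1}{4}\abs{\eta}^2 x^2},
\]
which is integrable on $\abs{x}>R_0$ with a bound depending only on $\abs{\eta}$ and $\beta$. On $\abs{x}\leq R_0$ the complex number $z(x,\eta,\varphi)$ is uniformly bounded for $\varphi\in\cU$, so by continuity of the entire function $\rmE_\beta$ this contributes at most $2R_0$ times the supremum of $\abs{\rmE_\beta(-w)}$ on the relevant compact disk; adding the two estimates yields the desired constant $C=C(M,\eta,\beta)$.

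The main obstacle is the polynomial decay estimate $\rmE_\beta(-r)\leq K_\beta/(1+r)$: this is exactly what makes integration against Lebesgue measure on all of $\R$ converge, and it is precisely the feature that distinguishes $\beta\in(0,1)$ from the exponentially decaying Gaussian case $\beta=1$. It is classical and can either be cited from the Mittag-Leffler literature (it follows from the standard algebraic asymptotic expansion of $\rmE_\beta$ valid off the sector around the positive real axis) or derived by Watson's lemma applied to $\rmE_\beta(-r)=\int_0^\infty\e^{-rs}g_\beta(s)\,\rmd s$ together with the known behaviour of $g_\beta$ near $s=0$.
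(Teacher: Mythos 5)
Your proof is correct, but it takes a genuinely different route from the paper's. The paper starts from the identity $\rmE_\beta(-z)=\int_0^\infty M_\beta(r)\e^{-rz}\,\rmd r$ (Corollary \ref{Ebeta=LaplaceMbeta}), pulls the absolute value inside to get $\abs{\rmE_\beta(-z)}\leq\int_0^\infty M_\beta(r)\e^{-r\Re z}\,\rmd r$, swaps the order of integration and evaluates the Gaussian $x$-integral in closed form; after the Cauchy--Schwarz estimate $\abs{\varphi_1}^2-\abs{\varphi_2}^2-\lab\eta,\varphi_1\rab^2/\lab\eta,\eta\rab>-M^2$ everything reduces to the finiteness of $\int_0^\infty M_\beta(r)\,r^{-1/2}\e^{M^2r/2}\,\rmd r$, which is Lemma \ref{intexists} and rests on the super-exponential decay of the $M$-Wright function. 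You instead use only the subordination bound $\abs{\rmE_\beta(-w)}\leq\rmE_\beta(-\Re w)$ on the closed right half-plane (correctly justified from Lemma \ref{mixingmeasure} via holomorphy, the identity theorem and continuity up to the boundary), the monotonicity of $r\mapsto\rmE_\beta(-r)$, and the classical algebraic decay $\rmE_\beta(-r)=O(1/r)$, splitting $\R$ into a compact core, where continuity of the entire function $\rmE_\beta$ suffices, and a tail where $\Re z(x,\eta,\varphi)\geq\tfrac14\abs{\eta}^2x^2$. Your argument is more elementary and bypasses the $M$-Wright function entirely, at the cost of importing the standard asymptotic expansion of $\rmE_\beta$ from the literature; the paper's heavier computation is not wasted, since the exact $r$-integral it produces is reused verbatim in Theorem \ref{theorem:Donsker} to identify the $T_{\mu_\beta}$-transform of Donsker's delta as a Fox $H$-function. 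You are also right to flag that $\eta\neq 0$ must be assumed: the statement as printed omits this hypothesis, both proofs need it (the paper's through the factor $\sqrt{2\pi/\lab\eta,\eta\rab}$), and it is only imposed later in Theorem \ref{theorem:Donsker}.
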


\begin{proof}
First note that $\rmE_\beta(-z(x,\eta,\varphi))$ is obviously measurable. Now use Corollary \ref{Ebeta=LaplaceMbeta} below and $\varphi = \varphi_1 + i\varphi_2$ for $\varphi_1,\varphi_2\in\cN$ and obtain:
\begin{align*}
	&\int_\R \abs{\rmE_\beta(-z(x,\eta,\varphi))}\,\rmd x	\leq \int_0^\infty M_\beta(r) \int_\R \exp\lb-r\Re(z(x,\eta,\varphi))\rb\,\rmd x\,\rmd r \\
	&=\sqrt{\frac{2\pi}{\lab\eta,\eta\rab}} \int_0^\infty M_\beta(r) r^{-1/2} \exp\lb-r\lb\halb\abs{\varphi_1}^2-\halb\abs{\varphi_2}^2 - \frac{\lab\eta,\varphi_1\rab^2}{2\lab\eta,\eta\rab}\rb \rb \,\rmd r.
\end{align*}
Since $\varphi$ is bounded, we have by Cauchy-Schwarz inequality that
\begin{align*}
	\abs{\varphi_1}^2-\abs{\varphi_2}^2 - \frac{\lab\eta,\varphi_1\rab^2}{\lab\eta,\eta\rab} 
	> \abs{\varphi_1}^2 - M^2 - \frac{\abs{\eta}^2 \abs{\varphi_1}^2}{\lab\eta,\eta\rab} = -M^2
\end{align*}
This yields that
\[
	\int_\R \abs{\rmE_\beta(-z(x,\eta,\varphi))}\,\rmd x \leq \sqrt{\frac{2\pi}{\lab\eta,\eta\rab}} \int_0^\infty M_\beta(r) r^{-1/2} \exp\lb\frac{1}{2}M^2 r \rb \,\rmd r.
\]
But this integral is finite as shown in Lemma \ref{intexists} below.
\end{proof}

\begin{theorem}\label{theorem:Donsker}
Let $0\neq\eta\in\cH$. Then Donsker's delta is defined via the integral
\[
	\delta(\lab\cdot,\eta\rab) := \frac{1}{2\pi}\int_\R \exp\lb ix\lab\cdot,\eta\rab \rb\,\rmd x,
\]
and exists in the space $\lb\cN\rb^{-1}_{\mu_\beta}$ as a weak integral in the sense of Theorem \ref{charint}. Moreover for all $\varphi\in\cU$, $\cU$ as in Proposition \ref{Prop:Integral} we have
\[
	\lb T_{\mu_\beta}\delta(\lab\cdot,\eta\rab)\rb(\varphi) = \frac{1}{\sqrt{2\pi \lab\eta,\eta\rab}} H^{1\,1}_{1\,2}\lb \halb\lab\varphi,\varphi\rab - \frac{\lab\eta,\varphi\rab^2}{2\lab\eta,\eta\rab} \left| \begin{matrix} (1/2,1) \\ (0,1),(1/2\beta,\beta) \end{matrix} \right. \rb,
\]
where $H$ denotes Fox-H-function, see Appendix \ref{AppendixI}.
\end{theorem}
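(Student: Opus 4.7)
My plan is to apply the $T_{\mu_\beta}$-transform version of Theorem \ref{charint} to the family $\Phi_x := \frac{1}{2\pi}\exp(ix\lab\cdot,\eta\rab)$, $x\in\R$, and then evaluate the resulting scalar integral explicitly. For existence in $(\cN)^{-1}_{\mu_\beta}$ the two hypotheses to check are (i) measurability of $x\mapsto T_{\mu_\beta}\Phi_x(\varphi)$ for each $\varphi$ in a suitable neighborhood $\cU\subset\cN_\C$ of zero, and (ii) a uniform $L^1(\R,\rmd x)$-bound for $\varphi\in\cU$. Both are at hand: formula \eqref{eq:Ttransform} shows that $x\mapsto T_{\mu_\beta}\Phi_x(\varphi)$ is continuous (hence measurable), and Proposition \ref{Prop:Integral} supplies the uniform bound on $\cU=\lcb\varphi\in\cN_\C:\abs{\varphi}<M\rcb$. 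The characterization theorem then yields $\delta(\lab\cdot,\eta\rab)\in(\cN)^{-1}_{\mu_\beta}$ together with
\[
T_{\mu_\beta}\delta(\lab\cdot,\eta\rab)(\varphi) = \frac{1}{2\pi}\int_\R \rmE_\beta\bigl(-z(x,\eta,\varphi)\bigr)\,\rmd x.
\]

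The next step is to bring this into closed form. I would use the Laplace representation $\rmE_\beta(-y)=\int_0^\infty M_\beta(r)\e^{-ry}\,\rmd r$ from Corollary \ref{Ebeta=LaplaceMbeta} and swap the order of integration; Fubini is justified because the estimate in the proof of Proposition \ref{Prop:Integral} is already an absolutely convergent double integral. The inner integral in $x$ is Gaussian, and completing the square
\[
\halb x^2\lab\eta,\eta\rab + x\lab\eta,\varphi\rab = \frac{\lab\eta,\eta\rab}{2}\Bigl(x+\frac{\lab\eta,\varphi\rab}{\lab\eta,\eta\rab}\Bigr)^2 - \frac{\lab\eta,\varphi\rab^2}{2\lab\eta,\eta\rab}
\]
followed by the standard evaluation $\int_\R \e^{-r\lab\eta,\eta\rab y^2/2}\,\rmd y = \sqrt{2\pi/(r\lab\eta,\eta\rab)}$ gives
\[
T_{\mu_\beta}\delta(\lab\cdot,\eta\rab)(\varphi) = \frac{1}{\sqrt{2\pi\lab\eta,\eta\rab}}\int_0^\infty M_\beta(r)\,r^{-1/2}\e^{-rw}\,\rmd r,
\]
where $w := \halb\lab\varphi,\varphi\rab - \lab\eta,\varphi\rab^2/(2\lab\eta,\eta\rab)$.

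It remains to identify this last integral with the Fox-H function in the statement. Using the Mellin-Barnes representation of $M_\beta$ indicated after Lemma \ref{mixingmeasure} and interchanging with the $r$-integral via $\int_0^\infty r^{s-3/2}\e^{-rw}\,\rmd r = \Gamma(s-1/2)w^{1/2-s}$ (valid for $\Re w>0$), one obtains a Mellin-Barnes integral whose numerator list acquires the extra pair $(1/2,1)$ coming from the $\Gamma(s-1/2)$ factor, while the denominator list $(0,1),(1/2\beta,\beta)$ is inherited unchanged from $M_\beta$; this is precisely the $H^{1\,1}_{1\,2}$ stated. For general $\varphi\in\cU$ the identity extends by analytic continuation, both sides being holomorphic on $\cU$ (the left by Theorem \ref{charint}, the right because Proposition \ref{Prop:Integral} confines $w$ to a half-plane on which the Laplace transform is holomorphic). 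The main obstacle is this final bookkeeping: matching the Mellin parameters exactly and verifying the contour conditions for the H-function in the argument range imposed by $\cU$; the preceding steps are routine applications of Fubini, Gaussian integration, and the characterization theorem.
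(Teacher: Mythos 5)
Your proposal follows essentially the same route as the paper: Proposition \ref{Prop:Integral} together with the $T_{\mu_\beta}$-version of Theorem \ref{charint} for existence, then Corollary \ref{Ebeta=LaplaceMbeta}, Fubini and the Gaussian integral to reduce the $T_{\mu_\beta}$-transform to $\frac{1}{\sqrt{2\pi\lab\eta,\eta\rab}}\int_0^\infty M_\beta(r)r^{-1/2}\e^{-rw}\,\rmd r$. The only divergence is the final identification, where the paper simply invokes its Lemma \ref{LaplaceMbeta} (proved there via the tabulated Laplace transform, inversion and multiplication rules for the H-function and then extended from $\Re w>0$ by the identity principle), whereas you re-derive that identity by a direct Mellin--Barnes computation; both are standard, and your analytic-continuation step to cover $\Re w\le 0$ mirrors what the paper does inside that lemma.
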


\begin{proof}
Due to Proposition \ref{Prop:Integral} there exists $C<\infty$ such that
\[
	\frac{1}{2\pi} \int_\R \lb T_{\mu_\beta}\exp(ix\lab\cdot,\eta\rab)\rb(\varphi)\,\rmd x < C,\quad\varphi\in\cU.
\]
Thus, by Theorem \ref{charint} the existence of $\delta(\lab\cdot,\eta\rab)\in (\cN)^{-1}_{\mu_\beta}$ follows. Finally we calculate the $T_{\mu_\beta}$-transform of Donsker's delta using Corollary \ref{Ebeta=LaplaceMbeta} below:
\begin{align*}
	&T_{\mu_\beta}\delta(\lab\cdot,\eta\rab)(\varphi) = \frac{1}{2\pi} \int_\R \rmE_\beta(-z(x,\eta,\varphi))\,\rmd x \\
	&= \frac{1}{2\pi} \int_0^\infty M_\beta(r) \exp\lb -\halb r\lab\varphi,\varphi\rab\rb \int_\R \exp\lb -\halb r\lab\eta,\eta\rab x^2 - r\lab\eta,\varphi\rab x\rb\,\rmd x\rmd r \\
	&= \frac{1}{\sqrt{2\pi\lab\eta,\eta\rab}} \int_0^\infty M_\beta(r) r^{-1/2} \exp\lb-r\lb \halb\lab\varphi,\varphi\rab - \frac{\lab\eta,\varphi\rab^2}{2\lab\eta,\eta\rab}\rb\rb\,\rmd r.
\end{align*}
Using Lemma \ref{LaplaceMbeta} below we get the desired result.
\end{proof}

\begin{remark}\label{Rem:seriesexpansion}
With the help of \eqref{eq:Hseries} below we can find the series expansion of the $T_{\mu_\beta}$-transform of Donsker's delta. In fact we have
\[
	H^{1\,1}_{1\,2}\lb z \left| \begin{matrix} (1/2,1) \\ (0,1),(1/2\beta,\beta) \end{matrix} \right. \rb = \sum_{k=0}^\infty \frac{(-1)^k \Gamma(k+1/2)}{k! \Gamma(1+\beta(k-1/2))}z^k,\quad z\in\C\setminus\lcb 0\rcb.
\]
Note that in the case $\beta=1$:
\[
	\sum_{k=0}^\infty \frac{(-1)^k \Gamma(k+1/2)}{k! \Gamma(1+k-1/2)}z^k = \e^{-z}.
\]
Thus our definition of Donsker's delta for $\beta=1$ becomes the usual Donsker's delta as known in Gaussian analysis.
\end{remark}

\begin{corollary}
The generalized expectation of Donsker's delta is given by 
\[
	\E\lb\delta\lb\lab\cdot,\eta\rab\rb\rb = \lb T_{\mu_\beta} \delta\lb\lab\cdot,\eta\rab\rb\rb(0) =  \ddp{\delta(\lab\cdot,\eta\rab)}{1}_{\mu_\beta}.
\]
Using the series expansion from Remark \ref{Rem:seriesexpansion}, we get:
\[
	\E\lb\delta\lb\lab\cdot,\eta\rab\rb\rb = \frac{1}{\sqrt{2\pi\lab\eta,\eta\rab}} \frac{\Gamma(1/2)}{\Gamma(1-1/2\beta)} = \frac{1}{\sqrt{2\lab\eta,\eta\rab}\Gamma(1-1/2\beta)}.
\]
\end{corollary}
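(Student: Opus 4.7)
The plan is to show both equalities are direct consequences of the preceding results, with the main task being a careful evaluation of $T_{\mu_\beta}\delta(\langle\cdot,\eta\rangle)$ at $\varphi=0$.

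First I would establish the chain $\E(\delta(\langle\cdot,\eta\rangle)) = (T_{\mu_\beta}\delta(\langle\cdot,\eta\rangle))(0) = \ddp{\delta(\langle\cdot,\eta\rangle)}{1}_{\mu_\beta}$. By the definition of the $T_{\mu_\beta}$-transform given in the Lemma on the $T_{\mu_\beta}$-transform, $T_{\mu_\beta}\Phi(\varphi) = \ddp{\Phi}{\exp(i\lab\cdot,\varphi\rab)}_{\mu_\beta}$, so evaluating at $\varphi=0$ collapses the test function $\exp(i\lab\cdot,0\rab)$ to the constant $1$, yielding $\ddp{\Phi}{1}_{\mu_\beta}$, which is by convention the generalized expectation. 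Since Donsker's delta lies in $(\cN)^{-1}_{\mu_\beta}$ by Theorem \ref{theorem:Donsker}, this applies. Note $0 \in \cU$, so the formula from Theorem \ref{theorem:Donsker} is valid at $\varphi=0$.

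Next I would plug $\varphi=0$ into the explicit formula of Theorem \ref{theorem:Donsker}. The argument of the Fox $H$-function becomes $\halb\lab 0,0\rab - \lab\eta,0\rab^2/(2\lab\eta,\eta\rab)=0$, so the task reduces to computing the $H$-function at $z=0$. Using the series representation from Remark \ref{Rem:seriesexpansion}, only the $k=0$ term survives in the limit $z\to 0$, giving $\Gamma(1/2)/\Gamma(1-\beta/2)$. Substituting back, one obtains
\[
(T_{\mu_\beta}\delta(\lab\cdot,\eta\rab))(0) = \frac{1}{\sqrt{2\pi\lab\eta,\eta\rab}}\cdot\frac{\Gamma(1/2)}{\Gamma(1-\beta/2)},
\]
and simplifying with $\Gamma(1/2)=\sqrt{\pi}$ yields the final form $1/(\sqrt{2\lab\eta,\eta\rab}\,\Gamma(1-\beta/2))$.

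No real obstacle arises, but one care point is that Remark \ref{Rem:seriesexpansion} states the series identity for $z \in \C\setminus\{0\}$; to pass to $z=0$ one invokes either continuity of the $H$-function at the origin (the series converges in a full neighborhood of $0$ since it represents a holomorphic function, namely the $T_{\mu_\beta}$-transform on $\cU$), or one bypasses the $H$-function altogether by returning to the integral representation derived inside the proof of Theorem \ref{theorem:Donsker}: at $\varphi=0$ this reads $(T_{\mu_\beta}\delta(\lab\cdot,\eta\rab))(0) = (2\pi\lab\eta,\eta\rab)^{-1/2}\int_0^\infty M_\beta(r)\,r^{-1/2}\,\rmd r$, and the integral can be evaluated via Lemma \ref{LaplaceMbeta} (at its zero argument) to produce the same constant $\Gamma(1/2)/\Gamma(1-\beta/2)$.
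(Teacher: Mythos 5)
Your proposal is correct and follows essentially the same route as the paper, which gives no separate proof but simply evaluates the formula of Theorem \ref{theorem:Donsker} at $\varphi=0$ using the series expansion from Remark \ref{Rem:seriesexpansion}, so that only the $k=0$ term $\Gamma(1/2)/\Gamma(1-\beta/2)$ survives and $\Gamma(1/2)=\sqrt{\pi}$ gives the final constant. Your added care about the series being stated only for $z\in\C\setminus\{0\}$ (resolved by holomorphy at the origin or by the Mellin-transform identity $\int_0^\infty M_\beta(r)r^{-1/2}\,\rmd r=\Gamma(1/2)/\Gamma(1-\beta/2)$) is a small but welcome tightening of a point the paper passes over silently.
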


In the same way, we can define Donsker's delta in any arbitrary point $a\in\R$:
\begin{proposition}
Let again $\eta\in\cH$. Then
\[
	\delta_a(\lab\cdot,\eta\rab) = \frac{1}{2\pi}\int_\R \exp\lb ix(\lab\cdot,\eta\rab-a)\rb\,\rmd x
\]
exists in $\lb\cN\rb^{-1}_{\mu_\beta}$ as a weak integral in the sense of Theorem \ref{charint} and defines Donsker's delta in $a\in\R$.
\end{proposition}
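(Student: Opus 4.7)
The plan is to mimic the proof of Theorem \ref{theorem:Donsker} almost verbatim, exploiting the fact that the extra factor $\e^{-\ii ax}$ has modulus $1$ and therefore does not affect any of the absolute-value estimates. Concretely, I would set $\Phi_x:=\tfrac{1}{2\pi}\exp\bigl(\ii x(\lab\cdot,\eta\rab-a)\bigr)$ for $x\in\R$. Since $a\in\R$ is real, $|\Phi_x(\omega)|=\tfrac{1}{2\pi}$, so $\Phi_x\in L^2(\mu_\beta)$ and the $T_{\mu_\beta}$-transform may be computed directly from its $L^2$-definition.

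First I would compute, for $\varphi\in\cN_\C$, using the integrability result in Lemma \ref{expint} and the identity principle,
\[
 T_{\mu_\beta}\Phi_x(\varphi)
 = \tfrac{1}{2\pi}\e^{-\ii ax}\int_{\cN'}\exp\bigl(\ii\lab\omega,x\eta+\varphi\rab\bigr)\,\rmd\mu_\beta(\omega)
 = \tfrac{1}{2\pi}\e^{-\ii ax}\rmE_\beta\bigl(-z(x,\eta,\varphi)\bigr),
\]
with $z(x,\eta,\varphi)$ as in Proposition \ref{Prop:Integral}. Measurability of $x\mapsto T_{\mu_\beta}\Phi_x(\varphi)$ is immediate from the continuity of this explicit expression in $x$, which verifies hypothesis \textit{(i)} in (the $T_{\mu_\beta}$-version of) Theorem \ref{charint}.

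Next, to check hypothesis \textit{(ii)}, I would observe that $|\e^{-\ii ax}|=1$, so
\[
 \int_\R \bigl|T_{\mu_\beta}\Phi_x(\varphi)\bigr|\,\rmd x
 = \frac{1}{2\pi}\int_\R\bigl|\rmE_\beta(-z(x,\eta,\varphi))\bigr|\,\rmd x.
\]
Proposition \ref{Prop:Integral} already supplies a finite constant $C$ bounding the right-hand side uniformly for $\varphi$ in the neighborhood $\cU=\{\varphi\in\cN_\C\mid|\varphi|<M\}$, for any $0<M<\infty$. Hence the two hypotheses of Theorem \ref{charint} (in the equivalent $T_{\mu_\beta}$-formulation provided by the preceding lemma) are satisfied, and we obtain a weak integral $\delta_a(\lab\cdot,\eta\rab):=\int_\R\Phi_x\,\rmd x\in(\cN)^{-1}_{\mu_\beta}$, which by definition is Donsker's delta at $a$.

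I do not expect any genuine obstacle: the only potentially delicate point is verifying that the $e^{-\ii ax}$-factor does not destroy the uniform bound, but because $a$ is real this factor has modulus one and Proposition \ref{Prop:Integral} applies without change. If desired, one could in a second step derive an explicit formula for the $T_{\mu_\beta}$-transform by the same Fox $H$-function manipulation carried out for $a=0$, picking up only a real phase from the Gaussian-type $x$-integration after completing the square; but for existence alone this is not necessary.
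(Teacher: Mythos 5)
Your proposal is correct and follows essentially the same route as the paper: compute the $T_{\mu_\beta}$-transform of the integrand, note that the extra factor $\e^{-\ii ax}$ has modulus one so that its absolute value coincides with the $a=0$ case, and then invoke Proposition \ref{Prop:Integral} and Theorem \ref{charint} exactly as in Theorem \ref{theorem:Donsker}. No gaps.
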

\begin{proof}
The $T_{\mu_\beta}$-transform of the integrand for $\varphi\in\cN_\C$ is given by:
\[
	\frac{1}{2\pi}\exp(-ixa) \rmE_\beta\lb -\halb x^2\lab\eta,\eta\rab - \halb\lab\varphi,\varphi\rab - x\lab\varphi,\eta\rab \rb .
\]
Hence its absolute value coincides with the $T_{\mu_\beta}$-transform in \eqref{eq:Ttransform} in the case $a=0$. Now we can proceed as in the case $a=0$.
\end{proof}

\section*{Outlook}
\noindent In a forthcoming paper, our aim is to work out the applications of Mittag-Leffler analysis to fractional differential equations. As mentioned in Remark \ref{specialcases} it is possible to choose a nuclear triple to obtain a realization of grey noise, see \cite{Sch90, MM09}. This is done as follows: Choose $\cN$ to be the space of Schwartz test functions $\cS(\R)$ equipped with the scalar product
\[
	(\eta,\varphi)_\alpha := C(\alpha) \int_\R \tilde{\eta}(x) \overline{\tilde{\varphi}(x)} \abs{x}^{1-\alpha}\,\rmd x,\quad\eta,\varphi\in\cS(\R),
\]
for $\alpha\in (0,2)$. Here $\tilde{\eta}$ denotes the Fourier transform of $\eta\in\cS(\R)$. The central Hilbert space $\cH$ is the completion of $\cS(\R)$ with respect to $(\cdot,\cdot)_\alpha$. Then $\ind\in\cH$ and $B_t^{\alpha,\beta} = \lab\cdot,\ind\rab,t\geq 0$ is a generalized grey Brownian motion, which coincides in the case $\beta=1$ with a fractional Brownian motion with Hurst parameter $H=\alpha/2$. This means that Theorem \ref{theorem:Donsker} with $\eta=\ind$ proves the existence of Dirac delta composed with a generalized grey Brownian motion $B^{\alpha,\beta}_t$. Furthermore, there we will show that $\delta(x+B_t^{\beta,\beta})$ is a solution of the time-fractional heat equation with initial value $\delta_x$.

\appendix

\section{Mittag-Leffler function, $M$-Wright function and Fox-H-function}\label{AppendixI}
\noindent This part serves to establish the relations between the Mittag-Leffler function $\rmE_\beta$, the $M$-Wright function $M_\beta$ and Fox-H-function $H$. In fact $\rmE_\beta$ and $M_\beta$ are special cases of $H$ and $\rmE_\beta(-z)$ is the Laplace transform of $M_\beta(z)$.

The H-function was discovered by Charles Fox in 1961 \cite{Fox61} and is a generalization of the G-function of Meijer. The definition is as follows: Let $m,n,p,q\in\N$, $0\leq n\leq p$ and $1\leq m\leq q$. Let $A_i,B_j\in\R$ be positive and $a_i,b_j\in\R$ or $\C$ arbitrary for $1\leq i\leq p,1\leq j\leq q$. Then
\[
	H^{m\,n}_{p\,q}\lb z \left| \begin{matrix} (a_p,A_p) \\ (b_q,B_q) \end{matrix} \right. \rb = H^{m\,n}_{p\,q}\lb z \left| \begin{matrix} (a_1,A_1),\dots,(a_p,A_p) \\ (b_1,B_1),\dots,(b_q,B_q) \end{matrix} \right. \rb = \frac{1}{2\pi i}\int_\cL \Theta(s)z^{-s}\,\rmd s,
\]
where
\[
	\Theta(s) = \frac{\lb \prod_{j=1}^m \Gamma(b_j+sB_j) \rb \lb \prod_{j=1}^n \Gamma(1-a_j-sA_j)\rb}{\lb\prod_{j=m+1}^q \Gamma(1-b_j-sB_j)\rb\lb\prod_{j=n+1}^p \Gamma(a_j+sA_j)\rb}.
\]
For further details concerning the contour $\cL$ and existence of $H$ we refer to \cite{MSH10}. The series expansion of $H$ for $\abs{z}>0$ is given in \cite{PBM90}
\begin{multline}\label{eq:Hseries}
	H^{m\,n}_{p\,q}\lb z \left| \begin{matrix} (a_p,A_p) \\ (b_q,B_q) \end{matrix} \right. \rb = \sum_{i=1}^m \sum_{k=0}^\infty \frac{\prod_{j=1,j\neq i}^m \Gamma(b_j-(b_i+k)\tfrac{B_j}{B_i})}{\prod_{j=m+1}^q \Gamma(1-b_j+(b_i+k)\tfrac{B_j}{B_i})} \\ \times\frac{\prod_{j=1}^n \Gamma(1-a_j+(b_i+k)\tfrac{A_j}{B_i})}{\prod_{j=n+1}^p \Gamma(a_j-(b_i+k)\tfrac{A_j}{B_i})} \frac{(-1)^k z^{(b_i+k)/B_i}}{k!B_i},
\end{multline}
under the condition that $\sum_{j=1}^q B_j - \sum_{j=1}^p A_j >0$ and $B_k(b_j+l) \neq B_j(b_k+s)$ for $1\leq j,k \leq m, j\neq k$ and $l,s\in\N_0$.

The $M$-Wright function $M_\beta$ for $0<\beta<1$ was introduced by Mainardi as an auxiliary function when finding the Green's function to the time-fractional diffusion-wave equation, see e.g.~\cite{Mai96, Mai96b}. Its series expansion is given by
\[
	M_\beta(z) = \sum_{n=0}^\infty \frac{(-z)^n}{n!\Gamma(-\beta n + 1-\beta)},\quad z\in\C.
\]
For more details we refer to \cite{MMP10} and the references therein. We only mention that $M_\beta(t) \geq 0$ for $t\geq 0$.

\begin{lemma}\label{Mbeta=Hfunction}
Due to \eqref{eq:Hseries} the $M$-Wright function $M_\beta$ is a special case of Fox-H-function and for all $z\in\C$ it holds that
\[
	M_\beta(z) = H^{1\,0}_{1\,1}\lb z \left| \begin{matrix} (1-\beta,\beta) \\ (0,1) \end{matrix} \right. \rb.
\]
\end{lemma}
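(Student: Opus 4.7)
The plan is to apply the series expansion \eqref{eq:Hseries} directly with the specialized parameters $m=1$, $n=0$, $p=1$, $q=1$, and $a_1=1-\beta$, $A_1=\beta$, $b_1=0$, $B_1=1$, and show that the resulting series coincides term by term with the defining series of $M_\beta$.

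First I would verify the hypotheses under which \eqref{eq:Hseries} is valid. The condition $\sum_{j=1}^q B_j-\sum_{j=1}^p A_j = 1-\beta>0$ holds since $0<\beta<1$, and the separation condition $B_k(b_j+l)\neq B_j(b_k+s)$ for $1\le j,k\le m$ with $j\neq k$ is vacuous because $m=1$. This ensures we may use \eqref{eq:Hseries} for $|z|>0$.

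Next I would simplify the four products appearing in \eqref{eq:Hseries}. The outer sum reduces to the single index $i=1$, with $b_i=0$ and $B_i=1$. The product $\prod_{j=1,j\neq i}^m$ is empty (since $m=1$); the product $\prod_{j=m+1}^q$ runs from $j=2$ to $q=1$ and is empty; and the product $\prod_{j=1}^n$ is empty because $n=0$. Hence only the factor in the denominator $\prod_{j=n+1}^p\Gamma(a_j-(b_i+k)A_j/B_i)$ survives, contributing $\Gamma(1-\beta-\beta k)$. The exponent $z^{(b_i+k)/B_i}$ becomes $z^k$, and the overall prefactor is $(-1)^k/(k!\,B_i)=(-1)^k/k!$. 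Putting these together gives
\[
H^{1\,0}_{1\,1}\lb z \left| \begin{matrix} (1-\beta,\beta) \\ (0,1) \end{matrix} \right. \rb
=\sum_{k=0}^\infty \frac{(-z)^k}{k!\,\Gamma(1-\beta-\beta k)},
\]
valid for $|z|>0$, which is precisely the series defining $M_\beta(z)$.

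Finally I would note that both sides are entire functions of $z$: the series for $M_\beta$ converges on all of $\C$ by the standard estimates for the $\Gamma$-function in the denominator, so agreement on $\{z:|z|\neq 0\}$ extends by continuity to $z=0$. The computation is entirely routine; the only thing that requires care is tracking which of the four $\Gamma$-products in \eqref{eq:Hseries} are empty and which survive, together with a brief check of the conditions needed to invoke \eqref{eq:Hseries}.
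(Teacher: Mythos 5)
Your proposal is correct and follows exactly the route the paper intends: the paper gives no separate proof beyond the phrase ``Due to \eqref{eq:Hseries}'', and your computation simply fills in the routine specialization $m=1$, $n=0$, $p=q=1$ with the stated parameters, correctly identifying the surviving $\Gamma$-factor $\Gamma(1-\beta-\beta k)$ and matching the series for $M_\beta$. The verification of the condition $\sum B_j-\sum A_j=1-\beta>0$ and the continuity extension to $z=0$ are appropriate finishing touches.
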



\begin{lemma}\label{intexists}
Let $\rho\geq\halb$. Then the integral
\[
	\int_0^\infty \abs{M_\beta(r/\beta) r^{\rho-1} \exp(-rz)}\,\rmd r
\]
is finite for all $z\in\C$.
\end{lemma}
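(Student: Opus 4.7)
The plan is to split the integral at $r=1$ and to bound each piece by exploiting two features of the $M$-Wright function: first, $M_\beta$ is continuous and non-negative on $[0,\infty)$ with $M_\beta(0)=1/\Gamma(1-\beta)<\infty$; second, $M_\beta(t)$ decays faster than any exponential as $t\to\infty$. For the near-zero piece $\int_0^1$ there is no difficulty: continuity of $M_\beta$ bounds $M_\beta(r/\beta)$ by some constant $C_1$ on $[0,1]$, the factor $\abs{\e^{-rz}}=\e^{-r\Re z}$ is dominated by $\e^{\abs{\Re z}}$ there, and the remaining factor $r^{\rho-1}$ is integrable on $(0,1]$ because $\rho-1>-1$, which follows from $\rho\geq\halb$.

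For the tail $\int_1^\infty$ I would invoke the classical super-exponential decay of $M_\beta$. One natural route is to combine the identity $M_\beta(t)=\tfrac{1}{\beta}t^{-1-1/\beta}f_\beta(t^{-1/\beta})$ from Lemma \ref{mixingmeasure} with the well-known short-argument asymptotics of the one-sided $\beta$-stable density $f_\beta$; this yields constants $C_2, K>0$ such that
\[
	M_\beta(t) \leq C_2\exp\bigl(-K t^{1/(1-\beta)}\bigr),\quad t\geq 1.
\]
Since $1/(1-\beta)>1$ for $\beta\in(0,1)$, this decay eventually dominates every polynomial times every exponential in $r$. In particular, for any fixed $z\in\C$ the integrand $M_\beta(r/\beta)\,r^{\rho-1}\,\e^{-r\Re z}$ is eventually bounded by, say, $\e^{-r}$, hence integrable on $[1,\infty)$, and the bound is uniform on compact sets in $z$.

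The main obstacle is justifying the super-exponential asymptotic of $M_\beta$. This can be done either by a saddle-point analysis of the Mellin--Barnes integral underlying the representation $M_\beta=H^{1\,0}_{1\,1}$ from Lemma \ref{Mbeta=Hfunction}, or more economically by directly quoting the asymptotic expansion of $M_\beta$ at infinity worked out by Mainardi and collaborators (see~\cite{Mai96, MMP10}). Once that bound is in hand, the remaining argument is a routine split-and-dominate estimate, and crucially the finiteness holds for \emph{every} $z\in\C$ because the factor $\e^{-r\Re z}$ is always swallowed by the super-exponential decay of $M_\beta$.
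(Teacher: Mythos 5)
Your proposal is correct and follows essentially the same route as the paper: split the integral and control the tail via the super-exponential asymptotic $M_\beta(t)\sim a(\beta)\,t^{(\beta-1/2)/(1-\beta)}\exp\bigl(-b(\beta)t^{1/(1-\beta)}\bigr)$ quoted from \cite{MMP10}. The only immaterial difference is the near-zero piece, where you bound $M_\beta$ by continuity and integrate $r^{\rho-1}$ directly, whereas the paper invokes the moment formula $\int_0^\infty r^{\rho-1}M_\beta(r)\,\rmd r=\Gamma(\rho)/\Gamma(\beta\rho+1-\beta)$.
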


\begin{proof}
For the proof we use the asymptotic behaviour of the $M$-Wright function given in \cite{MMP10}:
\[
	M_\beta(r/\beta) \sim a(\beta) r^{\tfrac{\beta-1/2}{1-\beta}} \exp\lb-b(\beta)r^{1/(1-\beta)}\rb,\quad r\to\infty,
\]
with $a(\beta)=(2\pi(1-\beta))^{-1/2}$ and $b(\beta)=(1-\beta)/\beta$.
This gives the asymptotic behaviour for the integrand:
\begin{align*}
	&M_\beta(r/\beta)r^{\rho-1}\exp(-r\Re(z)) \\
	&\sim a(\beta) r^{\delta-1} \exp\lb r(-b(\beta)r^{\beta/(1-\beta)}-\Re(z))\rb:=g(r),
\end{align*}
where $\delta=\tfrac{\beta-1/2}{1-\beta}+\rho>0$. This means in particular that there exists $r_1>0$, such that for all $r>r_1$
\[
	\abs{M_\beta(r/\beta)r^{\rho-1}\exp(-r\Re(z)) - g(r)} \leq \abs{g(r)}.
\]
Further we choose $r_2>0$ such that for all $r>r_2$
\[
	-\Re(z) - b(\beta)r^{\beta/(1-\beta)} < 0.
\]
We set $r_0=\max(r_1,r_2)$ and split the integral
\begin{align*}
	&\int_0^\infty M_\beta(r/\beta) r^{\rho-1} \exp\lb -r\Re(z) \rb \rmd r  \\ &= \int_0^{r_0} M_\beta(r/\beta) r^{\rho-1} \exp\lb-r\Re(z)\rb \rmd r + \int_{r_0}^\infty M_\beta(r/\beta) r^{\rho-1} \exp\lb-r\Re(z)\rb \rmd r .
\end{align*}
For the first integral we use the moments of the $M$-Wright function (see \cite{MMP10}):
\[
	\int_0^\infty r^\alpha M_\beta(r) \,\rmd r = \frac{\Gamma(\alpha+1)}{\Gamma(\beta\alpha+1)}
\]
for $\alpha>-1$ and using the coordinate transform $r=s\beta$ we obtain
\begin{align*}
	&\int_0^{r_0} M_\beta(r/\beta) r^{\rho-1} \exp\lb-r\Re(z)\rb \rmd r \leq \exp\lb r_0\abs{\Re(z)} \rb \beta^\rho \int_0^\infty M_\beta(s)s^{\rho-1}\,\rmd s \\
	&= \exp\lb r_0\abs{\Re(z)} \rb \beta^\rho \frac{\Gamma(\rho)}{\Gamma(\beta\rho + 1-\beta)} < \infty.
\end{align*}
For the second integral we can estimate as follows:
\begin{align*}
	&\int_{r_0}^\infty M_\beta(r/\beta) r^{\rho-1} \exp\lb-r\Re(z)\rb \rmd r \\
	&\leq \int_{r_0}^\infty \abs{M_\beta(r/\beta) r^{\rho-1} \exp\lb-r\Re(z)\rb - g(r)} + \abs{g(r)}\,\rmd r \\
	&\leq 2a(\beta)\exp\lb -r_0\Re(z)\rb \int_{r_0}^\infty r^{\delta-1} \exp\lb -r_0b(\beta)r^{\beta/(1-\beta)}\rb \,\rmd r.
\end{align*}
From \cite{Gradshteyn} it is known that for $\Re(\mu)>0$, $\Re(\nu)>0$ and $p>0$
\[
	\int_0^\infty r^{\nu-1} \exp\lb -\mu r^p\rb\,\rmd r = \frac{1}{p} \mu^{-\nu/p} \Gamma(\nu/p).
\]
Thus:
\begin{align*}
	&\int_{r_0}^\infty M_\beta(r/\beta) r^{\rho-1} \exp\lb-r\Re(z)\rb \rmd r \\
	&\leq 2a(\beta) \exp\lb-r_0\Re(z)\rb \frac{1-\beta}{\beta} \lb r_0b(\beta)\rb^{-\delta(1-\beta)/\beta}\Gamma(\tfrac{\delta(1-\beta)}{\beta}) <\infty.
\end{align*}
This finishes the proof.
\end{proof}

\begin{lemma}
The mapping  
\begin{equation}\label{eq:integral}
	\C\ni z\mapsto\int_0^\infty M_\beta(r) r^{\rho-1} \exp(-rz)\,\rmd r\in\C
\end{equation}
is holomorphic for $\rho\geq\halb$.
\end{lemma}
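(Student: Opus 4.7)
The plan is to apply Morera's theorem exactly as in the earlier proof that $l_{\mu_\beta}$ is holomorphic, using Lemma \ref{intexists} to produce the required integrable majorants. Writing $F(z) := \int_0^\infty M_\beta(r) r^{\rho-1} \exp(-rz)\,\rmd r$, note that Lemma \ref{intexists} (after the trivial substitution $r\mapsto r\beta$, which only changes the integrand by a multiplicative constant and a shift in $\rho$) guarantees that $F(z)$ is well-defined and finite for every $z\in\C$.

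First I would establish continuity of $F$. Fix $z_0\in\C$ and a bounded neighborhood $V$ of $z_0$; choose $R>0$ so that $-\Re(w)\le R$ for all $w\in V$. Then for every $w\in V$ and every $r\ge 0$,
\[
	\abs{M_\beta(r) r^{\rho-1} \exp(-rw)} \le M_\beta(r) r^{\rho-1} \exp(rR),
\]
and the right-hand side is integrable by Lemma \ref{intexists} applied with $z=-R\in\C$ (so that $\Re(z)=-R$ gives exactly the exponent $rR$). Continuity of $F$ on $V$ then follows from Lebesgue's dominated convergence theorem.

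Next I would verify Morera's condition. Let $\gamma$ be an arbitrary closed bounded curve in $\C$. Since $\gamma$ is compact, it is contained in some bounded neighborhood $V$ as above, and the dominating function above is uniform in $w\in V$. Hence
\[
	\int_\gamma \int_0^\infty \abs{M_\beta(r) r^{\rho-1} \exp(-rz)}\,\rmd r\,\rmd z < \infty,
\]
so Fubini's theorem applies and gives
\[
	\int_\gamma F(z)\,\rmd z = \int_0^\infty M_\beta(r) r^{\rho-1} \lb \int_\gamma \exp(-rz)\,\rmd z\rb\,\rmd r = 0,
\]
since $z\mapsto\exp(-rz)$ is entire for every fixed $r\ge 0$. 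Combined with the continuity established above, Morera's theorem yields that $F$ is holomorphic on $\C$.

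No step here looks delicate: the only point that needs care is producing the dominating function uniformly in $z$ on a bounded set, and this is handled cleanly by Lemma \ref{intexists} because the real number $-R$ majorizes $-\Re(w)$ for all $w$ in the chosen neighborhood. This is the same Morera+Fubini template already used in the proof that $l_{\mu_\beta}$ is holomorphic, so no new machinery is needed.
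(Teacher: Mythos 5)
Your proof is correct and follows essentially the same route as the paper: dominated convergence for continuity using an integrable majorant supplied by Lemma \ref{intexists}, followed by Fubini and Morera's theorem. The only (harmless) cosmetic differences are that you set up the majorant uniformly on a bounded neighborhood from the outset rather than along a sequence $z_n\to z$ with $\abs{z_n}\le\abs{z}+1$, and your parenthetical about the substitution $r\mapsto r\beta$ slightly misdescribes its effect (it rescales $z$ and introduces a constant factor, not a shift in $\rho$), which does not affect the argument.
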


\begin{proof}
Let $z\in\C$, $(z_n)_{n\in\N}$ a sequence in $\C$ converging to $z$. Then for sufficiently large $n\in\N$ the estimate $\abs{z_n}\leq\abs{z}+1$ holds. Thus
\[
	\abs{M_\beta(r) r^{\rho-1} \exp(-rz_n)}\leq M_\beta(r) r^{\rho-1} \exp(r(\abs{z}+1)),\quad r>0.
\]
By Lemma \ref{intexists} the integral
\[
	\int_0^\infty M_\beta(r) r^{\rho-1} \exp(r(\abs{z}+1))\,\rmd r = \int_0^\infty M_\beta(r) r^{\rho-1} \exp(-r(-\abs{z}-1))\,\rmd r
\]
is finite and continuity of \eqref{eq:integral} follows by Lebesgue's dominated convergence. Let now $\gamma$ be any closed and bounded curve in $\C$. Then:
\[
	\int_\gamma \int_0^\infty M_\beta(r) r^{\rho-1} \exp(-rz)\,\rmd r \,\rmd z = \int_0^\infty M_\beta(r) r^{\rho-1} \int_\gamma \exp(-rz)\,\rmd z\,\rmd r = 0,
\]
since the exponential is holomorphic on $\C$. By Morera's theorem \eqref{eq:integral} is holomorphic. 
\end{proof}

\begin{lemma}\label{LaplaceMbeta}
We have for all $z\in\C$ and $\rho\geq 1/2$ that
\begin{equation}\label{eq:LaplaceM}
	\int_0^\infty M_\beta(r) r^{\rho-1} \exp(-rz)\,\rmd r = H^{1\,1}_{1\,2}\lb z \left| \begin{matrix} (1-\rho,1) \\ (0,1),(\beta(1-\rho),\beta) \end{matrix} \right. \rb.
\end{equation}
\end{lemma}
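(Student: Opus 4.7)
The plan is to prove the identity by expanding both sides as power series in $z$ and matching coefficients, using the identity principle to reduce to a neighborhood of the origin.

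First I would observe that the left-hand side of \eqref{eq:LaplaceM} is an entire function of $z$ (this was established in the preceding lemma), and that the right-hand side, via the series expansion \eqref{eq:Hseries}, is also an entire function of $z$: in the present setting $m=n=p=1$, $q=2$ with parameters $(a_1,A_1)=(1-\rho,1)$, $(b_1,B_1)=(0,1)$, $(b_2,B_2)=(\beta(1-\rho),\beta)$, so $\sum_j B_j - \sum_j A_j = 1+\beta-1 = \beta > 0$, guaranteeing convergence on all of $\mathbb{C}\setminus\{0\}$, and since $m=1$ the non-coincidence condition on the poles is vacuous. By the identity principle, it suffices to establish the equality for $z$ in any neighborhood of the origin.

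Next, for $|z|$ bounded I would expand $\exp(-rz)=\sum_{n=0}^\infty (-rz)^n/n!$ and interchange sum and integral on the left-hand side. This is justified by the pointwise bound $|\exp(-rz)|\le\exp(r|z|)$ together with Lemma \ref{intexists} (after the substitution $r\mapsto r\beta$ if needed), which ensures $\int_0^\infty M_\beta(r) r^{\rho-1} \exp(r|z|)\,\rmd r < \infty$ for $\rho\ge 1/2$. Applying then the moment formula
\[
\int_0^\infty r^\alpha M_\beta(r)\,\rmd r = \frac{\Gamma(\alpha+1)}{\Gamma(\beta\alpha+1)}, \qquad \alpha>-1,
\]
(used earlier in the proof of Lemma \ref{intexists}) with $\alpha=n+\rho-1\ge -1/2$ yields
\[
\int_0^\infty M_\beta(r) r^{\rho-1} \exp(-rz)\,\rmd r = \sum_{n=0}^\infty \frac{(-1)^n\,\Gamma(n+\rho)}{n!\,\Gamma(\beta n+\beta\rho-\beta+1)}\,z^n.
\]

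Finally I would specialize the series expansion \eqref{eq:Hseries} to the parameters above. Since $m=1$, the outer sum collapses to $i=1$; the products $\prod_{j=1,j\neq 1}^1$, $\prod_{j=2}^1$ are empty (equal to $1$); and what remains is
\[
\sum_{k=0}^\infty \frac{\Gamma(1-a_1+k)}{\Gamma(1-b_2+k\,B_2/B_1)}\,\frac{(-1)^k z^k}{k!}
= \sum_{k=0}^\infty \frac{(-1)^k\,\Gamma(\rho+k)}{k!\,\Gamma(\beta\rho-\beta+1+k\beta)}\,z^k,
\]
which coincides term by term with the series derived for the left-hand side. This establishes \eqref{eq:LaplaceM}.

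The main obstacle is really just careful bookkeeping in matching the empty products and parameters of the Fox-H series to the computed moment series; the analytic pieces (holomorphy, the Fubini/dominated convergence interchange, and the moment identity) are all immediate consequences of results already proved in the excerpt.
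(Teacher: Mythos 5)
Your proof is correct, but it takes a genuinely different route from the paper. The paper restricts first to the half-plane $\Re(z)>0$, writes $M_\beta$ as $H^{1\,0}_{1\,1}$ via Lemma \ref{Mbeta=Hfunction}, and then invokes three pieces of H-function operational calculus from \cite{KST06} (the Laplace-transform formula, the inversion formula and the multiplication rule) before extending to $\C\setminus\{0\}$ by the identity principle and checking $z=0$ separately through the series \eqref{eq:Hseries} and the Mellin transform of $M_\beta$. You instead expand $\exp(-rz)$, integrate term by term using the moment formula $\int_0^\infty r^\alpha M_\beta(r)\,\rmd r=\Gamma(\alpha+1)/\Gamma(\beta\alpha+1)$, and match the resulting power series with the specialization of \eqref{eq:Hseries}; your bookkeeping of the parameters and empty products is correct, the coefficient $(-1)^n\Gamma(n+\rho)/\bigl(n!\,\Gamma(\beta n+\beta\rho-\beta+1)\bigr)$ agrees on both sides, and the interchange of sum and integral is legitimate by Tonelli since $M_\beta\ge 0$ and Lemma \ref{intexists} (after the substitution you indicate) gives $\int_0^\infty M_\beta(r)r^{\rho-1}\e^{r\abs{z}}\,\rmd r<\infty$. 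What your approach buys is self-containedness: it avoids the external transform identities of \cite{KST06} entirely, uses only ingredients already present in the paper (the moment formula from the proof of Lemma \ref{intexists} and the expansion \eqref{eq:Hseries}), works for all $z\in\C$ at once, and absorbs the paper's separate $z=0$ check into the $k=0$ term of the series. What it gives up is that the identity is established only at the level of the series representation of the H-function rather than its contour-integral definition; since $\sum_j B_j-\sum_j A_j=\beta>0$ guarantees the series represents the function on all of $\C\setminus\{0\}$ (and the paper itself defines the value at $0$ by this series), nothing is actually lost here, but the paper's route is the one that generalizes to parameter ranges where the series does not converge everywhere.
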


\begin{proof}
Assume first that $\Re(z)>0$. Then Lemma \ref{Mbeta=Hfunction} implies
\[
	\int_0^\infty M_\beta(r) r^{\rho-1} \exp(-rz)\,\rmd r = \int_0^\infty H^{1\,0}_{1\,1}\lb r \left| \begin{matrix} (1-\beta,\beta) \\ (0,1) \end{matrix} \right. \rb r^{\rho-1} \exp(-rz)\,\rmd r.
\]
Using the formula for the Laplace transform of the H-function in \cite{KST06}, we obtain (since $\Re(z)>0$)
\begin{align*}
	\int_0^\infty H^{1\,0}_{1\,1}\lb r \left| \begin{matrix} (1-\beta,\beta) \\ (0,1) \end{matrix} \right. \rb r^{\rho-1} \exp(-rz)\,\rmd r \\= z^{-\rho} H^{1\,1}_{2\,1}\lb z^{-1} \left| \begin{matrix} (1-\rho,1),(1-\beta,\beta) \\ (0,1) \end{matrix} \right. \rb.
\end{align*}
The inversion formula for the H-function, see e.g.~\cite{KST06} yields
\[
	H^{1\,1}_{2\,1}\lb z^{-1} \left| \begin{matrix} (1-\rho,1),(1-\beta,\beta) \\ (0,1) \end{matrix} \right. \rb = H^{1\,1}_{1\,2}\lb z \left| \begin{matrix} (1,1) \\ (\rho,1),(\beta,\beta) \end{matrix} \right. \rb,
\]
and the multiplication rule, see e.g.~\cite{KST06} gives
\[
	z^{-\rho} H^{1\,1}_{1\,2}\lb z \left| \begin{matrix} (1,1) \\ (\rho,1),(\beta,\beta) \end{matrix} \right. \rb = H^{1\,1}_{1\,2}\lb z \left| \begin{matrix} (1-\rho,1) \\ (0,1),(\beta-\rho\beta,\beta) \end{matrix} \right. \rb.
\]
Thus the statement is shown on the halfplane $\Re(z)>0$. Since the  H-function is holomorphic on $\C\backslash\lcb 0\rcb$ and the integral on the left hand side of \eqref{eq:LaplaceM} is holomorphic on $\C$ the equality extends by the identity principle to $\C\backslash\lcb 0\rcb$. Using \eqref{eq:Hseries} we obtain
\[
	H^{1\,1}_{1\,2}\lb z \left| \begin{matrix} (1-\rho,1) \\ (0,1),(\beta-\rho\beta,\beta) \end{matrix} \right. \rb = \sum_{k=0}^\infty \frac{(-z)^k\Gamma(\rho+k)}{k!\Gamma(1-\beta+\beta\rho+\beta k)}.
\]
Hence we can extend the H-function holomorphically to $z=0$ and
\[
	H^{1\,1}_{1\,2}\lb 0 \left| \begin{matrix} (1-\rho,1) \\ (0,1),(\beta-\rho\beta,\beta) \end{matrix} \right. \rb = \frac{\Gamma(\rho)}{\Gamma(1-\beta+\beta\rho)}.
\]
On the other hand, using the Mellin transform of $M_\beta$, see \cite{MMP10}, we find
\[
	\int_0^\infty M_\beta(r) r^{\rho-1} \,\rmd r = \frac{\Gamma(\rho)}{\Gamma(\beta(\rho-1)+1)}.
\]
Thus both sides coincides on $\C$.
\end{proof}

\begin{corollary}\label{Ebeta=LaplaceMbeta}
For all $z\in\C$ the following holds
\[
	\int_0^\infty M_\beta(r) \exp(-rz)\,\rmd r = \rmE_\beta(-z).
\]
\end{corollary}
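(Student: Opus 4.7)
The plan is to specialize Lemma \ref{LaplaceMbeta} to the case $\rho = 1$. Since $\rho = 1 \geq 1/2$ satisfies the hypothesis of that lemma, substituting this value into its conclusion immediately gives
\[
\int_0^\infty M_\beta(r) \exp(-rz)\,\rmd r = H^{1\,1}_{1\,2}\lb z \left| \begin{matrix} (0,1) \\ (0,1),(0,\beta) \end{matrix} \right. \rb
\]
for all $z \in \C$. The remaining task is just to recognize the right-hand side as $\rmE_\beta(-z)$.

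To that end, I would reuse the series representation of this H-function that was already established inside the proof of Lemma \ref{LaplaceMbeta}. Substituting $\rho = 1$ into
\[
H^{1\,1}_{1\,2}\lb z \left| \begin{matrix} (1-\rho,1) \\ (0,1),(\beta-\rho\beta,\beta) \end{matrix} \right. \rb = \sum_{k=0}^\infty \frac{(-z)^k\Gamma(\rho+k)}{k!\,\Gamma(1-\beta+\beta\rho+\beta k)}
\]
yields $\sum_{k=0}^\infty \frac{(-z)^k\Gamma(k+1)}{k!\,\Gamma(1+\beta k)}$. Since $\Gamma(k+1) = k!$ cancels the denominator $k!$, this collapses to $\sum_{k=0}^\infty \frac{(-z)^k}{\Gamma(1+\beta k)}$, which by definition equals $\rmE_\beta(-z)$.

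There is essentially no obstacle here; the corollary is a one-line consequence of plugging $\rho = 1$ into the preceding lemma. As an independent sanity check one could also combine Lemma \ref{mixingmeasure} (which states $\rmE_\beta(-t) = \int_0^\infty \e^{-ts}\,\rmd\nu_\beta(s)$ for $t \geq 0$) with the identification $g_\beta = M_\beta$ noted just after that lemma; this gives the identity on the positive real axis, and the identity principle then extends it to all of $\C$, using that the left-hand integral is holomorphic in $z$ (by the lemma immediately preceding Lemma \ref{LaplaceMbeta}) and that $\rmE_\beta$ is entire by definition.
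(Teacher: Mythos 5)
Your proof is correct and follows exactly the paper's own route: the paper likewise sets $\rho=1$ in Lemma \ref{LaplaceMbeta} and identifies the resulting series $\sum_{k=0}^\infty \frac{(-z)^k\Gamma(1+k)}{k!\,\Gamma(1+\beta k)}$ with $\rmE_\beta(-z)$. Your additional sanity check via Lemma \ref{mixingmeasure} and the identity principle is a reasonable cross-validation but is not needed.
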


\begin{proof}
Use Lemma \ref{LaplaceMbeta} with $\rho=1$ and get
\[
	\int_0^\infty M_\beta(r) \exp(-rz)\,\rmd r = \sum_{k=0}^\infty \frac{(-z)^k\Gamma(1+k)}{k!\Gamma(1-\beta+\beta+\beta k)} = \rmE_\beta(-z).
\]
\end{proof}

\section*{Acknowledgement}
\noindent The authors would like to thank FCT - Funda\c{c}\~{a}o para Ci\^{e}ncia e a Tecnologia - for financial support through the project Ref\textsuperscript{\underline{a}} PEst-OE/MAT/UI0219/2011 and PEst-OE/MAT/UI0219/2014. M.~Grothaus, F.~Jahnert and F.~Riemann would like to thank the members of CCM - Centro de Ci\^{e}ncias Matem\'{a}ticas - where this project was initiated, for their warm hospitality. F.~Jahnert and F.~Riemann gratefully acknowledge financial support in the form of a fellowship of the German state Rhineland-Palatinate.

\bibliographystyle{alpha}
\bibliography{bibliography}
 
\end{document}